\newtheorem{theorem}{Theorem}[section]
\newtheorem{proposition}[theorem]{Proposition}
\newtheorem{corollary}[theorem]{Corollary}
\theoremstyle{definition}
\newtheorem{definition}[theorem]{Definition}
\begin{document}

\newcommand{\V}{\boldsymbol{V}} 
\newcommand{\s}{\mathcal{S}}
\newcommand{\e}{\mathrm{e}^}

\title{The Tutte-Potts connection in the presence of an external magnetic field}

\author[J.~Ellis-Monaghan]{Joanna A. Ellis-Monaghan}
\address{Department of Mathematics, Saint Michael's College, 1 Winooski Park, Colchester, VT 05439, USA.  }
\email{jellis-monaghan@smcvt.edu}
\thanks{The work of the first author was supported by the Vermont EPSCoR.}

\author[I.~Moffatt]{ Iain Moffatt}
\address{Department of Mathematics and Statistics,  University of South Alabama, Mobile, AL 36688, USA}
\email{imoffatt@jaguar1.usouthal.edu}

\subjclass[2010]{05C31,   05C22,  82B20}

\keywords{Tutte polynomial,  Potts model, Ising model,  $\V$-polynomial,  $W$-polynomial, external magnetic field,  Hamiltonian, partition function,  Fortuin-Kasteleyn representation, statistical mechanics }

\date{\today}

\maketitle

\begin{abstract}
The classical relationship between the Tutte polynomial of graph theory and
the Potts model of statistical mechanics has resulted in valuable interactions between the
disciplines. Unfortunately, it does not include the external magnetic
fields that appear in most Potts model applications. Here we define the $\V$-polynomial,
which lifts the classical relationship between the Tutte polynomial
and the zero field Potts model to encompass external magnetic fields. The
$\V$-polynomial generalizes Noble and Welsh's $W$-polynomial, which extends the Tutte polynomial
by incorporating vertex weights and adapting contraction to accommodate them.
We prove that the variable field Potts model partition function (with its many specializations)
is an evaluation of the $\V$-polynomial, and hence a polynomial with deletion-contraction
reduction and Fortuin-Kasteleyn type representation. This unifies an important segment of Potts model theory and brings previously successful combinatorial machinery, including
complexity results, to bear on a wider range of statistical mechanics models.

This e-print is an extended version, including additional background information and more detailed proofs, of a paper of the same name that is to appear in Advances in Applied Mathematics.
\end{abstract}


\section{Introduction}

The classical relationship between the Tutte polynomial of graph theory and the Potts model of statistical mechanics has resulted in valuable interactions between the fields, particularly in the investigation of zeros and computational complexity.  Unfortunately, the classical theory does not encompass the external magnetic field that appears in most Potts model applications. Our main result is a new graph polynomial, $\V$, that lifts the well known relation between the classical Tutte polynomial and the zero field Potts model to the full Potts model with external magnetic field, and provides a single polynomial for the study of previously disparate Potts models.  It furthermore specializes to the previously known result relating the classical Tutte polynomial and the zero field Potts model.  This provides a framework for extending results from Tutte polynomial investigations to the variable field Potts model, and vice versa. In particular, the relationship shows that the variable field partition function is a polynomial in a set of expressions, gives  a deletion-contraction reduction that is conducive to induction arguments, and provides a Fortuin-Kasteleyn-type representation for the model. Existing results for the  $W$- and $U$-polynomials then make connections between the variable field Potts model and knot theory, and also provide computational complexity results for variable field Potts models. 

The Potts model of statistical mechanics models how micro-scale nearest
neighbour energy interactions in a complex system determine the macro-scale
behaviours of the system.  This model plays an important role in the theory of
phase transitions and critical phenomena in physics, and has applications as
widely varied as magnetism, tumor migration, foam
behaviours, and social demographics. The underlying network of the system is typically modelled by a graph, with the vertices representing molecules, cells, bubbles, households, \emph{etc}., with edges between `neighbouring' units, \emph{i.e.} those that may influence each other.   The vertices may have different properties called \emph{spins} assigned to them, representing anything from magnetic spin to socio-economic status of the household, and neighbouring vertices have an interaction energy.  A \emph{state} of a graph is a choice of spin at each vertex.  The Potts model theory has its origins in the study of magnetism, and we use that terminology, for example spins and states, here. Two recent surveys of the Potts model and its connections with graph theory are \cite{BE-MPS10} and \cite{WM00}.

The $q$-state Potts model partition function is the normalization factor for the
Boltzmann probability distribution and is given by: $Z(G) = \sum {\e{- \beta (h(\sigma ))}}$ where the sum is over all possible states $\sigma$ of a graph $G$ using $q$ spins. Here $\beta$ is a function of the temperature, and the Hamiltonian $h$ is a measure of the energy of the state.  The Hamiltonian may have many different forms depending on the specific application.  In the absence of an external magnetic field and with constant interaction energies between adjacent vertices, the Hamiltonian of a state $\sigma$ is simply  $h(\sigma ) =  - J\sum\limits_{\{i,j\}  \in E(G)} {\delta(\sigma _i ,\sigma _j)}$, where $\delta$ is the Kronecker $\delta$-function. In this case, the Potts model partition function is equivalent to the classical Tutte polynomial:  
\[Z\left( {G;q,\beta } \right) = {q^{k\left( G \right)}}{v^{\left| {v\left( G \right)} \right| - k\left( G \right)}}T\left( {G;\frac{{q + v}}
{v},v + 1} \right),\]
where $v = \e{J\beta } - 1$.  (See \cite{FK72} for the nascent stages of this theory and later exposition in \cite{Bax82, BE-MPS10, Big93, Bol98, Loe10, Tut84,  Wel93, WM00}).  This relationship has resulted in a remarkable synergy between the areas of combinatorics and statistical mechanics, particularly for computational complexity and the study of the zeros of these polynomials. (See \cite{BE-MPS10, GJ07, Roy09, Sok05, WM00}  for surveys of these results.) 

The equivalence of the Tutte polynomial and the Potts model partition function assumes the absence of an external field.  However, many applications of the Potts model depend on additional terms in the Hamiltonian corresponding to the presence of such additional influences, for example the standard models of magnetism, the cellular Potts model of \cite{GG92}, and also see \cite{O+03} for examples in the life sciences.  Furthermore, within many of these models, there is a need for edge dependent interaction energies and site dependent external fields.  Significant work has been done in incorporating the edge dependent interaction energies, both from a combinatorial perspective with edge weighted generalizations of the Tutte polynomial (see \cite{BR99, E-MT06, Zas92}) and from a statistical mechanics approach (see \cite{Sok05} for a survey).  Only very recently though have external fields been investigated in the context of graph polynomials.  In the special case of the Ising model, essentially the Potts model with $q=2$, an interesting new polynomial has been found in \cite{WF}.  This polynomial has a deletion-contraction relation and captures approximating functions, but does not specialize to the Tutte polynomial, nor does the deletion-contraction extend to non-constant magnetic fields. Also, some weighting strategies have been developed to study non-zero magnetic fields in a graph colouring framework (see \cite{CS09, CS10, SX10}), with an accompanying graph polynomial.  These papers show that the Potts model partition function with an external field term does not have a traditional deletion-contraction reduction, and in fact give details for its deviation from this.

Here however, by using a definition of contraction that incorporates vertex weights, we are able to assimilate a Hamiltonian of the following generic form into the theory of the Tutte-Potts connection:
\begin{equation}\label{e.ham}  h(\sigma) =  -\sum_{ \{ i,j \} \in E(G) }  J_{i,j} \delta(\sigma_i, \sigma_j)  - \sum_{v_i\in V(G)}    \sum_{\alpha =1}^q M_{i,\alpha}  \delta(\alpha,  \sigma_i)  ,\end{equation}
where a magnetic field vector 
 \[ \boldsymbol{M}_i :=\{  M_{i,1}, M_{i,2}, \ldots , M_{i,q}  \}  \]
is associated to each vertex $v_i$. From this generic form we are able to specialize to various forms of the Hamiltonian with external fields that are common in the physics literature.  

Our approach is motivated by the $W$- and $U$-polynomials of Noble and Welsh \cite{NW99}.  These graph polynomials were originally developed in the context of knot theory through an investigation of the combinatorics  behind Chmutov, Duzhin and Lando's work on Vassiliev invariants (see \cite{CDL94I, CDL94II, CDL94III}). They generalize the classical Tutte polynomial by incorporating vertex weights, while retaining essential properties such a deletion-contraction reduction, state sum formulation, and universality (or recipe theorem). Like the classical Tutte polynomial, the $W$-polynomial has a deletion-contraction reduction, but it begins with positive integer weights on the vertices, and these weights are summed if an edge joining them is contracted. We define a $\V$-polynomial that extends the $W$-polynomial by incorporating edge weights to encode variable interaction energies and allowing vertex weights in a semigroup, in particular  the vector space $\mathbb{C}^q$.

If we think of the weighted spins in the Hamiltonian as vertex weights, then the additional term  $\sum_{i\in V(G)}    \sum_{\alpha =1}^q M_{i,\alpha}  \delta(\alpha,  \sigma_i) $
is handled in the deletion-contraction relations by summing the vector valued vertex weights exactly as done by the $U$-, $\V$-, and $W$-polynomials. We use this insight about the vertex weights to prove that the Potts model partition function with an external field is an evaluation of the $\V$-polynomial. This gives the desired deletion-contraction reduction for the external field Potts model, facilitating induction arguments.  More importantly, these partition functions may now be expressed as polynomials and Fortuin-Kasteleyn-type representations, which facilitates Taylor expansions and computer simulations used to estimate critical exponents and phase transitions. 

Since the classical Tutte polynomial is a specialization of the $\V$-polynomial, this relationship contains the original relationship between the classical Tutte polynomial and zero field Potts model.  This new relationship for the variable field Potts model extends the computational and analytic tools available to statistical mechanics applications.  In particular, we are able to immediately transfer computational complexity results for the $W$- and $U$- polynomials to the extended Potts model partition functions. Other results, such as those on zeros and phase transitions in the classical and multivariate settings, might also be adapted to this new context. Furthermore, provocative new connections between knot theory and statistical mechanics arise from this theory.

\section{Background}

\subsection{The $q$-state Potts model}

We begin by recalling some basic information about the Potts model.  
Let $G$ be a graph and consider a set of $q$ elements, called \emph{spins}.  In the
abstract, the spins may be numbers or colours, but typically they are values
relevant to some specific application. For example, in studying uniaxial
magnetic materials, $q = 2$, and the possible spins are $ + 1$ and $ - 1$.  
A \emph{state} of a graph $G$ is an assignment of a single spin to each vertex
of the graph. Combinatorially, a graph state may be thought of simply as a colouring (not necessarily proper) of the graph. Thus, if the vertex set of $G$ is $V(G)=\{v_1, \ldots, v_n\}$, then a {\em state} of $G$ is an assignment  $\sigma: V(G)\rightarrow \{1,\ldots , q\} $, for $q\in \mathbb{Z}^+$. We let $\s(G)$ denote the set of states of $G$. 

For greater readability we let $\sigma_i := \sigma (v_i)$, for $\sigma \in \s(G)$. In addition, to simplify some of the summation formulas, at times we will use the indices $i=1,\ldots, n$ of the vertices in place of the vertices, for example we may denote an edge  $e=\{v_i, v_j\}$  by $e=\{i,j\}$, and a vertex $v_i$ as $i$. This convention will always be clear from context and should cause no confusion.  

The interaction energy may be thought of simply as a weight on edge of the graph. In physics applications, the interaction energies are typically real numbers, with the model called \emph{ferromagnetic} if all are positive, and \emph{antiferromagnetic} if all are negative.  However, in our context of graph polynomials, they may simply be taken to be independent commuting variables. We will denote the interaction energy on an edge $e=\{v_i, v_j\}$ by $J_e:=J_{i,j} (= J_{v_i,v_j})$, and let
\[       j(\sigma) :=  -\sum_{ \{ i,j \} \in E (G)} J_{i,j}  \delta(\sigma_i, \sigma_j)  =
 -\sum_{ e \in E (G)} J_{e}   \delta(\sigma_i, \sigma_j) . \]

The \emph{Hamiltonian} is a measure of the energy of a state.  We
begin with the simplest formulation, where the interaction energy is a constant $J$, and the
Hamiltonian depends only on the nearest neighbour interactions (without any
external field or other modifying forces).  

\begin{definition} \label{zero Hamiltonian}
The \emph{zero field Hamiltonian} is

\begin{equation} \label{z.ham}
h(\sigma ) =  - J\sum\limits_{ij  \in E(G)} {\delta(\sigma _i ,\sigma _j)},
\end{equation}
where $\sigma$ is a state of a graph $G$, where $\sigma_i $ is the spin at vertex
$i$, and where $\delta $ is the Kronecker delta function.

\end{definition}

However, to encompass Hamiltonians with various other external fields and variable interaction energies, we need a much more general form.

\begin{definition} \label{general Hamiltonian}

Let $G$ be a graph.  Assign to each edge $e$ an interaction energy of $J_e $, and assign to each vertex $v_i$ a magnetic weight vector  \[ \boldsymbol{M}_i :=(  M_{i,1}, M_{i,2}, \ldots , M_{i,q}  ). \]
 
Then the  \emph{Hamiltonian of  the Potts model with variable edge interaction energy and variable magnetic field} is 
\begin{equation}\label{e.ham2}  
h(\sigma) =  -\sum_{ \{ i,j \} \in E }  J_{i,j} \delta(\sigma_i, \sigma_j)  - \sum_{v_i\in V(G)}    \sum_{\alpha =1}^q M_{i,\alpha}  \delta(\alpha,  \sigma_i).
\end{equation}

\end{definition}

Note that the Hamiltonian of Equation \ref{z.ham} may be recovered from the general Hamiltonian of Equation \ref{e.ham2} by setting $J_e =J$ for all edges $e$, and taking all the magnetic field vectors to be zero.  We will recover various other Hamiltonians of relevance to physics applications from the general Hamiltonian of Equation \ref{e.ham2} in a similar way.

Regardless of the choice of Hamiltonian, the Potts model partition function is the sum over all possible states of an exponential function of the Hamiltonian, as follows:

\begin{definition}  Given a set of
$q$ spins and a Hamiltonian $h$, then the \emph{$q$-state Potts
model partition function} of a graph $G$ is
\begin{equation*}
Z(G) = \sum_{\sigma \in \s (G)}   \e{-\beta h(\sigma)},
\label{Z}
\end{equation*}
where $\beta = 1/(\kappa T)$, where $T$ is the temperature of the system, and where $\kappa
= 1.38 \times 10^{-23} $ joules/Kelvin is the Boltzmann constant.

\end{definition}

The Potts model partition function is the normalization factor for the
Boltzmann probability distribution.  That is, the probability of a system in thermal equilibrium with its environment being in a particular state
$\sigma $ at temperature $T$ is: 

\begin{equation} \label{Boltzmann prob}
Pr\left( {\sigma ,\beta } \right) = \exp ( -
\beta h (\sigma ))/Z(G).
\end{equation}

Important thermodynamic functions such as internal energy,
specific heat, entropy, and free energy may all be expressed in terms of the partition function.  An important goal of statistical mechanics is to determine phase transition temperatures,
that is, critical temperatures around which a small change in temperature
results in an abrupt, non-analytic change in various physical
properties.  Since the thermodynamic functions can be given in terms of the partition function, determining the analyticity of the partition function in the infinite volume limit (over a class of well-behaved graphs of increasing size), both theoretically and through computer simulations, is central to this theory.  This has led to considerable study of the zeros of the partition function and its computational complexity.  

\subsection{The $U$- and $W$-polynomials}\label{s.w}

Since our methods build on Noble and Welsh's paper \cite{NW99}, we recall some definitions and results from their work.

As usual, $E(G)$ and $V(G)$ are the edge set and vertex set, respectively, of a graph $G$.  If $A \subseteq E(G)$, then $n(A)$, $r(A)$, and $k(A)$ are, respectively, the nullity, rank, and number of components of the spanning subgraph of $G$ with edges in $A$.  A {\em vertex weighted graph} consists of a graph $G$, with vertex set $V(G)=\{v_1, v_2, \ldots , v_n\}$ equipped with a  weight function $\omega :  V(G)\rightarrow \mathbb{Z}^+$. The {\em weight} of the vertex $v_i$ is the value $\omega (v_i)$. For simplicity of exposition we will often write $\omega_i$ rather than $\omega (v_i)$, and $\boldsymbol{x}$ for an indexed set of variables, \emph{e.g.} $\boldsymbol{x}=\{x_k| k \in S\}$ for some set $S$. 

\begin{definition}\label{dc}
If $G$ is a vertex weighted graph with weight function $\omega$, and $e$ is an edge of $G$, then:
\begin{enumerate}
\item  If $e$ is any edge of $G$, then $G-e$ is the graph obtained from $G$ by deleting the edge $e$ and leaving the weight function unchanged;

\item If $e$ is any non-loop edge of $G$, then $G/e$ is the graph obtained from $G$ by contracting the edge $e$ and changing  the weight function as follows: if $v_i$ and $v_j$ are the vertices incident to $e$, and $v$ is the vertex of $G/e$ created by the contraction, then $ \omega(v) =  \omega(v_i) +\omega(v_j) $.  Loops are not contracted.
\end{enumerate}
\end{definition}
The deletion and contraction of an edge in a vertex weighted graph is illustrated in Figure~\ref{f.dc}. 

\begin{figure}
\[\begin{array}{ccccc}
\includegraphics[width=3cm]{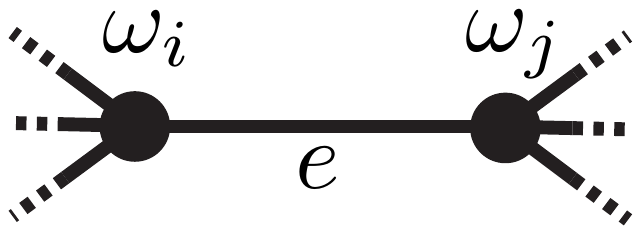} & \hspace{1cm}&\includegraphics[width=3cm]{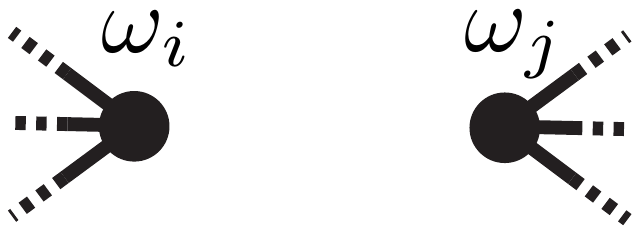} &\hspace{1cm}&\includegraphics[width=3cm]{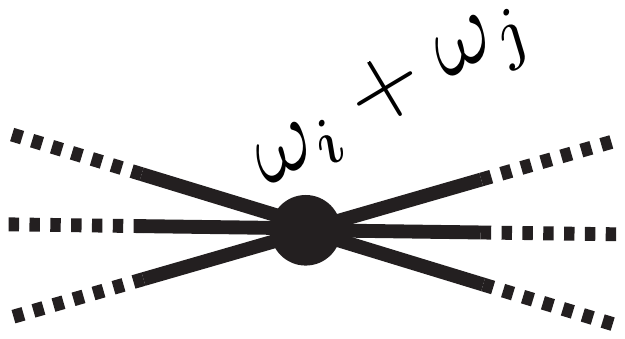}  \\
G && G-e && G/e
\end{array}
\]
\caption{Deletion and contraction of an edge  in a vertex weighted graph.}
\label{f.dc}
\end{figure}

We now recall Noble and Welsh's $W$-polynomial.

\begin{definition}[Noble and Welsh \cite{NW99}]\label{def.Wpoly}
 Let $G$ be a vertex weighted graph and $\{y, x_1, x_2, \ldots\}$ be a  set of commuting indeterminates.  
The {\em $W$-polynomial} of $G$,
\[ W(G)=W(G; \boldsymbol{x},y)\in \mathbb{Z} [  \{x_i\}_{i\in \mathbb{Z}^+}, y ]\]
is defined recursively by
\begin{enumerate}
\item if $e$ is not a loop then 
\[  W(G)= W(G-e)+W(G/e)  ,\]
where deletion and contraction act as in Definition~\ref{dc};
\item if $e$ is a loop, then \[W(G)=yW(G-e);\]
\item if $E_m$ consists of $m$ isolated vertices of weights $\omega_1, \ldots , \omega_m$, then 
\[  W(E_m)= \prod_{i=1}^{m} x_{\omega_i}.\]
\end{enumerate}
\end{definition}

The $W$-polynomial satisfies a recipe theorem (or universality theorem) that is analogous to the recipe of the Tutte polynomial (\cite{OW79}).  

\begin{theorem}[Noble and Welsh \cite{NW99}]\label{t.nwuniv}
Let $f$ be a function on vertex weighted graphs defined recursively by the following conditions:
\begin{enumerate}
\item if $e$ is not a loop then 
\[  f(G)= af(G-e)+bf(G/e)  .\]
\item If $e$ is a loop, then \[f(G)=yf(G-e).\]
\item If $E_m$ consists of $m$ isolated vertices of weights $\omega_1, \ldots , \omega_m$, 
\[  f(E_m)= \prod_{i=1}^{m} x_{\omega_i}.\]
\end{enumerate}
Then, if $a$ and $b$ are non-zero,
\[ f(G)=a^{|E(G)|-|V(G)|}b^{|V(G)|}  W\left(  G; \frac{a\boldsymbol{x} }{b} , \frac{y}{a}\right).  \]
\end{theorem}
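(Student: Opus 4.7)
The plan is to prove the recipe theorem by induction on $|E(G)|$, mirroring the standard argument for the Tutte polynomial's universality. Write $\Phi(G) := a^{|E(G)|-|V(G)|} b^{|V(G)|} W\bigl(G; a\boldsymbol{x}/b, y/a\bigr)$ for the proposed right-hand side; the goal is $f(G)=\Phi(G)$ for every vertex-weighted graph $G$. Since $W$ is well-defined (Noble--Welsh), so is $\Phi$, and I simply need to verify that $\Phi$ obeys the same three clauses that define $f$.

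For the base case take $G=E_m$, the edgeless graph on $m$ weighted vertices. Then $|E(G)|=0$ and $|V(G)|=m$, and clause (3) of Definition \ref{def.Wpoly} gives $W(E_m; a\boldsymbol{x}/b, y/a)=\prod_{i=1}^m (a x_{\omega_i}/b) = (a/b)^m \prod_i x_{\omega_i}$. Multiplying by $a^{-m}b^m$ cancels the $(a/b)^m$, producing $\prod_i x_{\omega_i}=f(E_m)$, as desired.

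For the inductive step with $G$ having at least one edge $e$, split into two cases. If $e$ is a non-loop edge, I compute the exponent shifts: $|E(G-e)|=|E(G)|-1$, $|V(G-e)|=|V(G)|$, while $|E(G/e)|=|E(G)|-1$ and $|V(G/e)|=|V(G)|-1$. Thus by the inductive hypothesis,
\[
a\,\Phi(G-e) + b\,\Phi(G/e) = a^{|E(G)|-|V(G)|} b^{|V(G)|}\bigl[W(G-e)+W(G/e)\bigr],
\]
where the arguments of $W$ are $(a\boldsymbol{x}/b, y/a)$ throughout; the bracket equals $W(G)$ by clause (1) of Definition \ref{def.Wpoly}, so $\Phi$ satisfies $\Phi(G)=a\Phi(G-e)+b\Phi(G/e)$, matching the defining recursion for $f$. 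If $e$ is a loop then $|V(G-e)|=|V(G)|$ and $|E(G-e)|=|E(G)|-1$, and clause (2) of Definition \ref{def.Wpoly} gives $W(G;a\boldsymbol{x}/b,y/a)=(y/a)\,W(G-e;a\boldsymbol{x}/b,y/a)$, so
\[
\Phi(G) = a^{|E(G)|-|V(G)|}b^{|V(G)|}\cdot(y/a)\,W(G-e;a\boldsymbol{x}/b,y/a) = y\,\Phi(G-e),
\]
again matching the clause for $f$. Since $f$ and $\Phi$ agree on the base case and obey identical recursions, they coincide on every vertex-weighted graph.

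There is no real obstacle here beyond careful bookkeeping of the exponents of $a$ and $b$ under deletion and contraction; the only point requiring attention is that the factor $b^{|V(G)|}$ loses one $b$ under contraction (compensating the extra $b$ coming from clause (1)) while the factor $a^{|E(G)|-|V(G)|}$ loses one $a$ under deletion (compensating the extra $a$). Once these cancellations are written out, the argument reduces to pattern-matching the $W$-recursion against the hypothesized $f$-recursion. Well-definedness of $f$ is assumed as part of the hypothesis, so no independent verification of order-independence is needed.
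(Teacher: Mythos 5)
Your proof is correct. The paper itself states this theorem as a citation to Noble and Welsh without reproducing a proof, but your argument is the standard one and is essentially the mirror image of the proof the paper does give for the analogous recipe theorem for $\V$ (Theorem~\ref{vrecp}): there the authors normalize $f$ by $\prod_e \alpha_e$ so that it satisfies the $\V$-recursion, whereas you rescale $W$ into $\Phi$ and check it satisfies the $f$-recursion; the exponent bookkeeping for $a^{|E(G)|-|V(G)|}b^{|V(G)|}$ under deletion, contraction, loops, and the edgeless base case is all handled correctly.
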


For reference later, we record the  following properties of the $W$-polynomial.
\begin{theorem}[Noble and Welsh \cite{NW99}]\label{t.nwprops} Let $G$ be a vertex weighted graph. Then
\begin{enumerate}
\item \label{t.nwprops1} $W(G)$ can be represented as a sum over spanning subgraphs:
\[ W(G)=\sum_{A\subseteq E(G)}   x_{c_1}   x_{c_2}\cdots   x_{c_{k(A)}}   (y-1)^{|A|-r(A)},   \]
where $c_l$ is the sum of the weights of all of the vertices of the $l$-th component of the spanning subgraph $(V(G), A)$. 

\item \label{t.nwprops2} If $x_i=\theta$, for each $i$, then the resulting polynomial $W(x_i=\theta, y)$ is given by the Tutte polynomial:
\[  W(x_i=\theta, y) = \theta^{k(G)}  T(G;1+\theta , y).\]
\end{enumerate}
\end{theorem}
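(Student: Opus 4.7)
The plan for both parts is to work directly from the recursive definition of $W$ and to mirror the standard argument that relates recursive and state-sum forms of the classical Tutte polynomial. Part~(1) I would prove by induction on $|E(G)|$, using Definition~\ref{def.Wpoly}; part~(2) then follows from part~(1) by comparison with the rank-nullity expansion of $T$.

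For the base case of part~(1), if $G$ has no edges then the only $A\subseteq E(G)$ is $A=\emptyset$; each vertex $v_i$ is its own component so $c_i=\omega_i$, the nullity exponent $|A|-r(A)$ is $0$, and the proposed state sum collapses to $\prod_i x_{\omega_i}$, matching clause~(3) of Definition~\ref{def.Wpoly}. For the inductive step I would pick any edge $e$ and split the sum $\sum_{A\subseteq E(G)}$ according to whether $e\in A$. If $e$ is a loop, adjoining $e$ to any $B\subseteq E(G)\setminus\{e\}$ changes neither the components, nor their weight sums, nor the rank, but increases $|A|$ by one; combining the $e\in A$ and $e\notin A$ halves factors out $1+(y-1)=y$, and the remaining sum is the state sum for $G-e$, giving $y\,W(G-e)=W(G)$ by induction and clause~(2). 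If $e$ is not a loop, the subsets not containing $e$ give exactly the state sum for $G-e$, while the map $A\mapsto A\setminus\{e\}$ bijects subsets containing $e$ with subsets of $E(G/e)$. Under this bijection the component weight sums are preserved, because the weight update $\omega(v)=\omega(v_i)+\omega(v_j)$ in Definition~\ref{dc} is designed precisely so the merged component in $G/e$ carries the same total weight; the number of components satisfies $k_{G/e}(A\setminus\{e\})=k_G(A)$; and the nullity statistic is preserved because $|V(G/e)|=|V(G)|-1$ yields $|A\setminus\{e\}|-r_{G/e}(A\setminus\{e\})=|A|-r_G(A)$. Hence the second half sums to $W(G/e)$, and clause~(1) closes the induction.

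For part~(2), substituting $x_i=\theta$ for all $i$ in the state sum of part~(1) yields $\sum_A \theta^{k(A)}(y-1)^{|A|-r(A)}$. Using $k(A)=|V(G)|-r(A)=k(G)+(r(G)-r(A))$, this rewrites as $\theta^{k(G)}\sum_A \theta^{r(G)-r(A)}(y-1)^{|A|-r(A)}$, and the remaining sum is exactly the classical rank-nullity expansion of $T(G;1+\theta,y)$, giving $\theta^{k(G)}T(G;1+\theta,y)$ as required.

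The only step I expect to require genuine care is the bookkeeping for the non-loop contraction in part~(1): one must simultaneously track that the weight-sum multiset, the component count, and the nullity statistic all survive the contraction bijection, and this is precisely where the unusual weight-sum contraction rule of Definition~\ref{dc} earns its keep. Everything else in the argument is routine.
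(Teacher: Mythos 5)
Your proof is correct. The paper does not actually prove this theorem (it is quoted from Noble and Welsh \cite{NW99}), but your induction on $|E(G)|$ --- splitting the sum over $e\in A$ versus $e\notin A$, using the contraction bijection that preserves component weight-sums and the nullity statistic, and then deducing part~(2) from the rank-nullity expansion of $T$ --- is exactly the argument the paper itself uses for the analogous state-sum expansion of $\V$ in Theorem~\ref{t.vstatesum}, so it is essentially the same approach.
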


The \emph{$U$-polynomial}, implicitly studied by Chmutov, Duzhin and Lando in \cite{CDL94III}, was formally defined by Noble and Welsh in \cite{NW99} as an adaptation of the $W$-polynomial to unweighted graphs. The $U$-polynomial of an unweighted  graph is is defined simply by assigning the weight $1$ to each vertex and then taking the $W$-polynomial of the resulting weighted graph.

\section{The $\V$-polynomial for edge weighted graphs}

In this section we define the $\V$-polynomial, which extends the $W$-polynomial to graphs that have both edge weights and more general vertex weights.   An {\em edge weighted} graph $G$ is a graph equipped with a mapping $\gamma$ from its edge set  $E(G)$, to a set $\boldsymbol{\gamma}:=\{\gamma_e\}_{e\in E(G)}$.  Throughout this paper we will use the standard convention  $\gamma:e\mapsto \gamma_e$, for $e\in E(G)$. 
We incorporate the edges weights using what has become a `standard' technique (see for example \cite{ BR99, E-MT06, Mof08, Sok05, Zas92}) for extending Tutte-like polynomials to weighted graphs: we associate an indeterminate to each edge that then becomes a factor in the deletion-contraction reduction. 

We also generalize the vertex weights.  In the original definition of the $W$-polynomial (\cite{NW99}) the vertex weights are taken to be in $\mathbb{Z}^+$. Here however, we only require that the weights be in a torsion-free commutative semigroup (we have not explored the possibility of torsion, but this would be an interesting investigation). All that is actually necessary is that the vertex weights be additive and be a subset of the indexing set  $S$ of the variables $\boldsymbol{x}=\{x_k\}_{k\in S}$ of the polynomial.  In fact, the weights need not range over the whole indexing set, provided that every possible sum of the vertex weights in each component of $G$ appears as an index in $\boldsymbol{x}$. For the physics applications later in the paper, we will usually consider vertex weights in a $q$-dimensional complex vector space. In the definition of $\V$, as with $W$, when  a non-loop edge is contracted, the vertex weights on its ends are added.

The letter $V$ is logical in the progression $U$-, $V$-, $W$-, and we use boldface simply to distinguish the polynomial $\V(G)$ from the vertex set $V(G)$.

\begin{definition}\label{mvw}
Let $S$ be a torsion-free commutative semigroup, let $G$ be a graph equipped with vertex weights $\boldsymbol{\omega}:=\{ \omega_i \} \subseteq S$  and edge weights $\boldsymbol{\gamma}:= \{\gamma_e\}$, and let $\boldsymbol{x}=\{x_k\}_{k\in S}$ be a set of commuting variables. Then the {\em $\V$-polynomial} of the vertex and edge weighted graph $G$, 
\[\V(G) = \V(G, \omega; \boldsymbol{x}, \boldsymbol{\gamma}) \in \mathbb{Z} [ \{\gamma_e\}_{e\in E(G)}, \{x_k\}_{k\in S}  ], \] is defined recursively by: 
\begin{enumerate}
\item if $e$ is a non-loop edge of $G$,
\[  \V(G) = \V(G-e)+\gamma_e  \V(G/e), \]
where  contraction acts as described in Definition~\ref{dc};
\item if $e$ is a loop, then 
\[ \V(G)=(\gamma_e+1)\V(G-e);  \]
\item if $E_m$ consists of $m$ isolated vertices of weights $\omega_1, \ldots , \omega_m$, then 
\[  \V(E_m)= \prod_{i=1}^{m} x_{\omega_i}.\]
\end{enumerate}
\end{definition}

We now prove several basic results about the $\V$-polynomial. The proofs of most of these results are adaptations of Noble and Welsh's proofs of the corresponding results about the $W$-polynomial from \cite{NW99}.

\begin{proposition}\label{p.vwelldefined}
The polynomial $\V$ is well defined in the the sense that the polynomial is independent of the order in which the deletion-contraction relation is applied to the edges.
\end{proposition}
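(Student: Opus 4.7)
The plan is to exhibit a closed-form state-sum expression for the polynomial computed by the recursion, and to verify that this expression is manifestly independent of the choice of reduction order. Specifically, for any vertex and edge weighted graph $G$, I would define
\[
f(G) := \sum_{A \subseteq E(G)} \left( \prod_{e \in A} \gamma_e \right) \prod_{c \in \kappa(A)} x_{\Omega(c)},
\]
where $\kappa(A)$ is the set of connected components of the spanning subgraph $(V(G), A)$ and $\Omega(c) := \sum_{v_i \in c} \omega_i$ is the semigroup sum of the vertex weights appearing in the component $c$. This plays the role of the spanning-subgraph formula for $W$ from Theorem \ref{t.nwprops}, adapted to incorporate the edge weights $\gamma_e$.

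First I would verify that $f$ satisfies the three clauses of Definition \ref{mvw}. The base case for $E_m$ is immediate, since only $A = \emptyset$ contributes and each vertex is its own component. For a non-loop edge $e$ of $G$, I would partition the subsets $A \subseteq E(G)$ according to whether $e \in A$. The subsets with $e \notin A$ sum to $f(G-e)$. For subsets with $e \in A$, I pull out the factor $\gamma_e$; the key observation is that including $e$ merges the two components containing its endpoints into a single component whose $\Omega$-value is the sum of the two original $\Omega$-values, and this is exactly the effect on vertex weights dictated by the contraction in Definition \ref{dc}. Hence these subsets sum to $\gamma_e f(G/e)$. For a loop $e$, a similar partition yields $f(G) = f(G-e) + \gamma_e f(G-e) = (\gamma_e + 1) f(G-e)$, since the loop is contained in its endpoint's component regardless and does not alter the component structure.

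Then I would prove by induction on $|E(G)|$ that any recursive computation of $\V(G)$ built by selecting edges in any order and applying clauses (1)--(3) must evaluate to $f(G)$. At each inductive step, the two graphs obtained by deletion or by contraction/loop-removal have strictly fewer edges, so the induction hypothesis identifies their computed values with $f(G-e)$ and $f(G/e)$ (respectively, $f(G-e)$ for a loop), and the verification above shows that these combine in exactly the way the recursion prescribes. Since $f(G)$ depends only on $G$ together with its vertex and edge weights, and not on any ordering of $E(G)$, this establishes that $\V(G)$ is well defined.

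The main technical point to watch is the behaviour of vertex weights under contraction inside the state sum: one must check that the semigroup addition used to combine $\omega_i$ and $\omega_j$ when contracting $e = \{v_i, v_j\}$ agrees with the $\Omega$-sum obtained when the two endpoint components merge. This is precisely the place where the torsion-free commutative semigroup hypothesis (and in particular commutativity and associativity) is needed, and where the extension from Noble and Welsh's $\mathbb{Z}^+$-valued weights requires genuine, if mild, verification. Once this bookkeeping is in place, the remainder is a routine adaptation of the corresponding argument for the $W$-polynomial in \cite{NW99}.
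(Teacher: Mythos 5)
Your proof is correct, but it takes a genuinely different route from the paper's. The paper argues by local confluence: it applies the deletion--contraction relation to two distinct edges $e$ and $f$ in both orders, expands both four-term expressions, and matches them using the graph isomorphisms $[G-e]/f\cong [G/f]-e$, $[G/e]-f\cong [G-f]/e$, etc., with the loop and parallel-edge cases handled "similarly"; well-definedness then follows by induction. You instead exhibit the closed-form spanning-subgraph expansion and show that every recursive computation, regardless of edge order, evaluates to it. Your argument is sound: the base case, the $e\in A$ versus $e\notin A$ split (with the bijection $A\mapsto A\setminus\{e\}$ onto subsets of $E(G/e)$ and the matching of component weight sums under contraction), and the loop case are all as they should be, and the final induction is routine. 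What your approach buys is that it sidesteps the case analysis the paper's confluence argument must at least gesture at (in particular the situation where contracting $e$ turns a parallel edge $f$ into a loop), and it delivers the paper's Theorem~\ref{t.vstatesum} as a byproduct rather than as a separate subsequent induction --- indeed the paper's own proof of that theorem is essentially your argument, so the paper in effect proves well-definedness twice. One small quibble: torsion-freeness of the semigroup plays no role in your verification; only commutativity and associativity of the weight addition are used, so you should not attribute the need for that hypothesis to this step.
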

\begin{proof}
We  proceed by induction on the number of edges of $G$. The result clearly holds for all graphs with fewer than two edges. Assume that $G$ has at least two edges, and that $e$ and $f$ are distinct non-loop edges of $G$. 
First we calculate $\V(G)$ by applying the deletion-contraction relation to the edge $e$ first, and $f$ second:
\begin{multline}\label{e.p1}
\V(G)= \V(G-e)+\gamma_e \V(G/e) \\ =
\V([G-e]-f)+  \gamma_f \V([G-e]/f) +   \gamma_e \V([G/e]-f) +\gamma_e\gamma_f \V( [G/e]/f).
 \end{multline}
Applying the deletion-contraction relation to the edge $f$ first, and $e$ second gives
\begin{multline}\label{e.p2}
\V(G)= \V(G-f)+\gamma_f \V(G/f) \\ =
\V([G-f]-e)+  \gamma_e \V([G-f]/e) +   \gamma_f \V([G/f]-e) +\gamma_f\gamma_e \V( [G/f]/e)
 \end{multline}
Then, since $[G-e]-f\cong [G-f]-e$, $[G-e]/f\cong [G/f]-e$, $[G/e]-f \cong [G-f]/e$ and $[G/e]/f\cong [G/f]/e$, the right hand sides of (\ref{e.p1}) and (\ref{e.p2}) are equal. A similar argument shows the independence of order when $e$ or $f$ is a loop.   The result then follows by induction.

\end{proof}

\begin{theorem}\label{t.vstatesum}
$\V(G)$ can be represented as a sum over spanning subgraphs:
\[ \V(G)=\sum_{A\subseteq E(G)}   x_{c_1}   x_{c_2}\cdots   x_{c_{k(A)}}   \prod_{e\in A} \gamma_e ,   \]
where $c_l$ is the sum of the weights of all of the vertices in the $l$-th connected component of the spanning subgraph $(V(G), A)$. 
\end{theorem}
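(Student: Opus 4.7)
The plan is to prove the state sum formula by induction on $|E(G)|$, exactly paralleling the recursive definition of $\V$. Writing $R(G)$ for the right-hand side of the claimed identity, I need to show $R(G) = \V(G)$.

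For the base case, when $G = E_m$ has no edges, the only subset is $A = \emptyset$, the connected components are the isolated vertices with weights $\omega_1,\ldots,\omega_m$, and the product $\prod_{e\in A} \gamma_e$ is empty. Hence $R(E_m) = \prod_{i=1}^{m} x_{\omega_i} = \V(E_m)$, matching clause (3) of Definition~\ref{mvw}.

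For the inductive step, choose an edge $e$ of $G$ and split the sum defining $R(G)$ into two pieces according to whether $e \in A$ or $e \notin A$. If $e$ is a non-loop edge, the subsets with $e \notin A$ are in bijection with subsets of $E(G-e)$, and since the spanning subgraph $(V(G), A)$ has exactly the same components and vertex-weight sums as $(V(G-e), A)$, this piece contributes $R(G-e)$. For subsets with $e \in A$, write $A = A' \sqcup \{e\}$ with $A' \subseteq E(G/e)$; the components of $(V(G), A)$ correspond bijectively to those of $(V(G/e), A')$, where the endpoints of $e$ have been merged into a single vertex whose weight is the sum of the two original weights. Consequently the $c_l$'s and hence the product $x_{c_1}\cdots x_{c_{k(A)}}$ agree with the corresponding expression computed on $G/e$, while the edge-weight product factors as $\gamma_e \prod_{e' \in A'} \gamma_{e'}$. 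This piece contributes $\gamma_e R(G/e)$. Adding the two pieces yields
\[ R(G) = R(G-e) + \gamma_e R(G/e), \]
which by the inductive hypothesis equals $\V(G-e) + \gamma_e\V(G/e) = \V(G)$.

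If $e$ is a loop, the same splitting applies, but now adding or removing $e$ from $A$ changes neither the vertex set nor the connected components of $(V(G), A)$; only the $\gamma_e$ factor is affected. The two pieces contribute $R(G-e)$ and $\gamma_e R(G-e)$ respectively, giving $R(G) = (1+\gamma_e) R(G-e) = \V(G)$ by clause (2) of the definition.

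The only delicate point is the bijection used in the non-loop case: one must check that the updated vertex-weight function on $G/e$ produces exactly the same $c_l$ values as the original weight function on $G$ summed over the components of $(V(G), A' \cup \{e\})$. This is immediate from the contraction rule $\omega(v) = \omega(v_i) + \omega(v_j)$, since the component of $G/e$ containing the merged vertex $v$ corresponds precisely to the component of $G$ obtained by identifying the endpoints of $e$, and the sum of weights is unchanged by this identification. Because Proposition~\ref{p.vwelldefined} guarantees that $\V$ is well defined independently of the order of edge reduction, no further consistency check is needed.
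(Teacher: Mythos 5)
Your proof is correct and follows essentially the same route as the paper: induction on $|E(G)|$, splitting the subset sum according to whether the chosen edge lies in $A$, and using the natural bijections with spanning subgraphs of $G-e$ and $G/e$ (with the weight-additivity of contraction ensuring the $c_l$ values match). Your explicit check of the delicate point about the merged vertex weight is a welcome addition, but the argument is the paper's argument.
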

\begin{proof}
Let
\[ S(G):= \sum_{A\subseteq E(G)}   x_{c_1}   x_{c_2}\cdots   x_{c_{k(A)}}   \prod_{e\in A} \gamma_e.\]
We will show by induction on the number of edges of $G$ that $S(G)=\V(G)$.
If $G$ has no edges then the result is clearly true. So suppose that $e$ is a non-loop edge of $G$.  Then  

\begin{multline}
S(G)=\sum_{A\subseteq E(G)}   x_{c_1}   x_{c_2}\cdots   x_{c_{k(A)}}   \prod_{e\in A} \gamma_e \\=
 \left( \sum_{\stackrel{A\subseteq E(G)}{e\notin A} }  x_{c_1}   x_{c_2}\cdots   x_{c_{k(A)}}   \prod_{e\in A} \gamma_e\right)
+\left(\sum_{\stackrel{A\subseteq E(G)}{e\in A} }  x_{c_1}   x_{c_2}\cdots   x_{c_{k(A)}}   \prod_{e\in A} \gamma_e\right) .
\end{multline}
There is a natural bijection between the spanning subgraphs of $G$ that contain the edge 
$e$ and the spanning subgraphs of $G/e$ (given by $A\mapsto A-\{e\}$). Using this correspondence and the obvious correspondence between the spanning subgraphs of $G-e$ and the spanning subgraphs of $G$ that do not contain $e$, we can write the above expression as
\[ \left(\sum_{A\subseteq E(G-e)}  x_{c_1}   x_{c_2}\cdots   x_{c_{k(A)}}   \prod_{e\in A} \gamma_e\right)
+\gamma_e \left(\sum_{A\subseteq E(G/e) }  x_{c_1}   x_{c_2}\cdots   x_{c_{k(A)}}   \prod_{e\in A} \gamma_e\right) .\]
This is equal to $S(G-e)+\gamma_e S(G/e)$. A similar argument shows that $S(G)=(\gamma_e+1)S(G-e)$ when $e$ is a loop.  It then follows by induction that $S(G)=\V(G)$.  
\end{proof}

We now give a recipe theorem for $\V$.

\begin{theorem} \label{vrecp}
Let $f$ be a function on vertex  and edge weighted graphs defined recursively by the following conditions (where each $\alpha_e\neq 0$):
\begin{enumerate}
\item for any non-loop edge $e$ 
\[  f(G)= \alpha_e f(G-e)+\beta_e f(G/e),\]
with $a_e \neq 0$ for all $e$;
\item for any loop $e$
\[f(G)=(\alpha_e + \beta_e)f(G-e); \]
\item If $E_m$ consists of $m$ isolated vertices of weights $\omega_1, \ldots , \omega_m$, 
\[  f(E_m)= \prod_{i=1}^{m} x_{\omega_i}.\]
\end{enumerate}
Then 
\[ f(G)=  \left( \prod_{e\in E(G)}  \alpha_e \right)\V (G, \omega;  \boldsymbol{x} , \{ \beta_e/\alpha_e \}_{e\in E(G)} )   \]
\end{theorem}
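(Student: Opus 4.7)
The plan is straightforward induction on $|E(G)|$, modeled on the proof of Theorem~\ref{t.nwuniv} and parallel to the well-definedness argument in Proposition~\ref{p.vwelldefined}. The base case is $E(G) = \emptyset$: clauses (3) of both the hypothesis on $f$ and the definition of $\V$ give $f(E_m) = \prod_{i=1}^m x_{\omega_i} = \V(E_m)$, and the empty product $\prod_{e\in \emptyset}\alpha_e$ equals $1$, so the formula holds.

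For the inductive step, fix an edge $e \in E(G)$ and assume the formula holds for all vertex and edge weighted graphs with fewer edges. First suppose $e$ is a non-loop. Since deletion and contraction do not disturb the remaining edge weights, the $\alpha_{e'}$ and $\beta_{e'}$ for $e' \neq e$ are the same in $G-e$ and $G/e$ as in $G$. Applying the defining relation for $f$ and the inductive hypothesis,
\begin{align*}
f(G) &= \alpha_e f(G-e) + \beta_e f(G/e)\\
 &= \Bigl(\prod_{e'\neq e}\alpha_{e'}\Bigr)\Bigl[\alpha_e \V(G-e) + \beta_e \V(G/e)\Bigr]\\
 &= \Bigl(\prod_{e'\in E(G)}\alpha_{e'}\Bigr)\Bigl[\V(G-e) + \tfrac{\beta_e}{\alpha_e} \V(G/e)\Bigr],
\end{align*}
where the division by $\alpha_e$ is legal by the non-vanishing hypothesis. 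By clause (1) of Definition~\ref{mvw} applied with edge weight $\gamma_e = \beta_e/\alpha_e$, the bracketed expression is exactly $\V(G, \omega; \boldsymbol{x}, \{\beta_{e'}/\alpha_{e'}\})$, giving the desired identity.

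For the loop case, clause (2) of the hypothesis and induction give
\[
f(G) = (\alpha_e+\beta_e)f(G-e) = (\alpha_e+\beta_e)\Bigl(\prod_{e'\neq e}\alpha_{e'}\Bigr)\V(G-e),
\]
which upon factoring out $\alpha_e$ becomes $\bigl(\prod_{e'\in E(G)}\alpha_{e'}\bigr)(1+\beta_e/\alpha_e)\V(G-e)$. Clause (2) of Definition~\ref{mvw} identifies $(1+\gamma_e)\V(G-e)$ with $\V(G)$ under the substitution $\gamma_e = \beta_e/\alpha_e$, completing the induction.

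There is essentially no obstacle here: the only subtle point is the bookkeeping of the substitution $\gamma_e \mapsto \beta_e/\alpha_e$ together with factoring the correct power of $\alpha_e$ in and out at each step, which is what forces the $\alpha_e \neq 0$ hypothesis. The independence of the result from the order of edge reduction is guaranteed by Proposition~\ref{p.vwelldefined}, so the induction may pick any edge at each stage.
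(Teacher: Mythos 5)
Your proof is correct and is essentially the paper's argument written out in full: the paper simply defines the normalized function $\widetilde{f} = f / \prod_{e\in E(G)}\alpha_e$ and observes it satisfies the defining recursion of $\V$ with $\gamma_e = \beta_e/\alpha_e$, which is exactly the computation your induction performs edge by edge. No gaps.
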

\begin{proof}
Let \[ \widetilde{f} := \frac{ 1}{(\prod_{e\in E(G)}  \alpha_e)} \;  f,\]
and let $\gamma_e:=\beta_e/\alpha_e$. Then 
\[    \widetilde{f} (G) =  \widetilde{f} (G-e)  + \gamma_e  \widetilde{f} (G/e), \]
when $e$ is a non-loop edge of $G$,
and 
\[   \widetilde{f} (G) = (1+\gamma_e)  \widetilde{f} (G-e), \]
when $e$ is a loop. So $ \widetilde{f} $ is equal to the $\V$-polynomial and the result follows.
\end{proof}

The {\em multivariate Tutte polynomial} (see \cite{Sok05} and also \cite{Tra89}),  which we will denote by $Z_T$ to distinguish it from the partition function $Z$, is an extension of the Tutte polynomial to edge weighted graphs, defined by \[ Z_T(G; \theta, \boldsymbol{\gamma} ) := \sum_{A\subseteq E(G)}  \theta^{k(A)} \prod_{e\in A} \gamma_e.   \]

In the following theorem we prove that the $\V$-polynomial generalizes the $W$-polynomial and the multivariate Tutte polynomial.  We first show that the $W$-polynomial can be recovered from the $\V$-polynomial by setting all the $\gamma_i$ variables in the $\V$-polynomial equal to $y-1$. Then, given that the $\V$-polynomial is an edge weighted version of the $W$-polynomial, and that the $W$-polynomial extends the classical Tutte polynomial, it is hardly surprising that the $\V$-polynomial also extends the multivariate Tutte polynomial.

\begin{theorem}\label{vprops} 
The $\V$-polynomial generalizes both the $W$-polynomial and the multivariate Tutte polynomial.
\begin{enumerate}
\item If $\omega: V(G) \rightarrow \mathbb{Z}^+$ and $\gamma_e=(y-1)$ for each $e\in E(G)$, then 
\[\V(G,\omega; \boldsymbol{x}, \gamma_e=(y-1))  =(y-1)^{|V(G)|}  W\left(  G, \omega; \frac{\boldsymbol{x} }{y-1} , y\right).\]

\item  If $x_i=\theta$, for each $i\in \mathbb{Z}^+$, then, independent of the weights $\omega$, 
\[\V(G, \omega; x_i=\theta , \boldsymbol{\gamma})  =Z_T(G; \theta , \boldsymbol{\gamma}).\]
\end{enumerate}
\end{theorem}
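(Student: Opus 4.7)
The plan is to prove both parts directly from the state sum formula for $\V$ established in Theorem~\ref{t.vstatesum}; once that representation is in hand, each identity reduces to a small bookkeeping calculation. For Part 1, I would start with the state sum specialized at $\gamma_e = y-1$:
\[\V(G,\omega; \boldsymbol{x}, \gamma_e=y-1) = \sum_{A \subseteq E(G)} x_{c_1} x_{c_2}\cdots x_{c_{k(A)}} (y-1)^{|A|},\]
and compare it against the Noble--Welsh state sum for $W$ recorded in Theorem~\ref{t.nwprops}(\ref{t.nwprops1}). Substituting $x_k \mapsto x_k/(y-1)$ in the latter produces the factor $(y-1)^{-k(A)}(y-1)^{|A|-r(A)}$ on each term; using the standard identity $r(A) = |V(G)| - k(A)$ this collapses to $(y-1)^{|A|-|V(G)|}$. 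Multiplying through by $(y-1)^{|V(G)|}$ then yields precisely the displayed $\V$ state sum, establishing the claimed equality.

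Alternatively, and perhaps more cleanly, Part 1 can be obtained by invoking the $W$-polynomial recipe theorem (Theorem~\ref{t.nwuniv}) with $a=1$ and $b=y-1$: with these choices the deletion--contraction and loop rules of $\V$ at $\gamma_e=y-1$ coincide exactly with the recipe hypotheses, and the isolated-vertex values agree, so the conclusion reads $\V(G,\omega;\boldsymbol{x},\gamma_e=y-1) = (y-1)^{|V(G)|} W(G,\omega;\boldsymbol{x}/(y-1),y)$. Either route works, and I would probably present the state sum calculation for self-containedness and include the recipe-theorem argument as a brief remark.

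For Part 2, setting $x_i = \theta$ uniformly in the state sum of Theorem~\ref{t.vstatesum} immediately gives
\[\V(G,\omega; x_i=\theta, \boldsymbol{\gamma}) = \sum_{A\subseteq E(G)} \theta^{k(A)} \prod_{e\in A} \gamma_e = Z_T(G;\theta,\boldsymbol{\gamma}),\]
since the product $x_{c_1}\cdots x_{c_{k(A)}}$ collapses to $\theta^{k(A)}$ regardless of the vertex-weight sums $c_l$. This makes the independence from $\omega$ manifest.

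The main \emph{obstacle} here is not conceptual but notational: one must be careful with the exponent arithmetic in Part 1, in particular the interplay between the $(y-1)^{-k(A)}$ arising from the substitution $x_k \mapsto x_k/(y-1)$ and the $(y-1)^{|A|-r(A)}$ already present in the $W$ state sum, ensuring that the rank--nullity identity is applied correctly. Once that identification is made the result is immediate from the state sum representation.
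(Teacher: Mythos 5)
Your argument is correct; both of your routes for Part 1 go through, and your Part 2 is identical to the paper's. The difference is in Part 1: the paper proves it by applying the recipe theorem for $\V$ (Theorem~\ref{vrecp}) to the function $f(G)=(y-1)^{|V(G)|}W\bigl(G;\boldsymbol{x}/(y-1),y\bigr)$, checking that $f$ satisfies the $\V$-recursion with $\alpha_e=1$, $\beta_e=y-1$, whereas your primary argument compares the two state sums directly, using Theorem~\ref{t.nwprops}(\ref{t.nwprops1}) together with $r(A)=|V(G)|-k(A)$ to match the exponents $(y-1)^{|V(G)|-k(A)+|A|-r(A)}=(y-1)^{|A|}$. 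Your exponent bookkeeping is right, so this works; it buys self-containedness at the level of explicit formulas but leans on the $W$ state sum as a black box, while the paper's route needs only the defining recursions and its own universality theorem. Your proposed alternative --- invoking the $W$-recipe theorem (Theorem~\ref{t.nwuniv}) with $a=1$, $b=y-1$ applied to $\V$ at $\gamma_e=y-1$ --- is essentially the mirror image of the paper's argument (recipe theorem for $W$ applied to $\V$, rather than recipe theorem for $\V$ applied to a rescaled $W$), and is equally valid since the hypothesis $\omega:V(G)\rightarrow\mathbb{Z}^+$ puts you in the setting where that theorem applies.
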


\begin{proof}
For the first item, we apply the recipe theorem for $\V$ (Theorem \ref{vrecp}) to 

\[f(G)=(y-1)^{|V(G)|}  W\left(  G; \frac{\boldsymbol{x} }{y-1} , y\right).\]   

By Definition \ref{def.Wpoly}, 
\begin{enumerate}
  \item $f(G)=f(G-e) + (y-1)f(G/e)$ for $e$ a non-loop edge;
  \item $f(G)=yf(G-e)$ if $e$ is a loop;
  \item $f(E_n)=\prod_{i=1}^{n}{x_{\omega_i}}$.
\end{enumerate}

Thus, $f$ satisfies the criteria of Theorem \ref{vrecp} with $\alpha_e=1$ and $\beta_e=y-1$, for all $e$, and the result follows.

\medskip

The second item follows easily from the state sum for the $\V$-polynomial: 
\[  \V(G; x_i=\theta , \boldsymbol{\gamma}) = \sum_{A\subseteq E(G)} \left( \prod_{i=1 }^{k(A)} \theta \right)
 \left( \prod_{e\in A} \gamma_e \right) =   \sum_{A\subseteq E(G)} \theta^{k(A)}  \prod_{e\in A} \gamma_e   =Z_T(G; \theta , \boldsymbol{\gamma}). \]
The second item also follows from the first item and the second item in Theorem~\ref{t.nwprops}.

\end{proof}

Although Theorem \ref{vprops}  tells us that the $\V$-polynomial assimilates the multivariate Tutte polynomial $Z_T$, the relation between the $\V$-polynomial and the fully parameterized coloured Tutte polynomial (see \cite{ BR99, E-MT06, Zas92}) remains to be explored.


\section{The $\V$-polynomial and the Potts model in an external field with edge dependent interaction energies.}

We now come to our reason for creating the $\V$-polynomial.  Here we show that the Potts model partition function with the Hamiltonian $h(\sigma)$ from Definition~\ref{general Hamiltonian}  is an evaluation of the $\V$-polynomial. This provides a recursive deletion-contraction definition for the full Potts partition function with variable edge interaction energy and variable magnetic field, and also shows that it is a polynomial.

\begin{theorem}\label{ZV} Let $G$ be a graph equipped with a magnetic field vector  $\boldsymbol{M}_i=(M_{i,1}, \ldots , M_{i,q})\in \mathbb{C}^q$ at each vertex $v_i$, and suppose
 \[ 
h(\sigma) = - \sum_{ \{ i,j \} \in E }  J_{i,j}  \delta(\sigma_i, \sigma_j)  - \sum_{v_i\in V(G)}    \sum_{\alpha =1}^q M_{i,\alpha}  \delta(\alpha,  \sigma_i).
 \]
Then 
\begin{enumerate}
\item \label{zv1} if $e=\{v_i,v_j\}$ is a non-loop edge of $G$,
\[Z(G)=Z(G-e) + (\e{\beta J_{i,j}}-1) Z(G/e) ;\]
\item \label{zv3} if $e=\{v_i,v_i\}$ is a loop  \[Z(G)=\e{\beta J_{i,i}}Z(G-e)  ;\]
\item \label{zv2} $Z(E_n) =\prod_{i=1}^n X_{\boldsymbol{M}_i}$, where $E_n$ consists of $n$ isolated vertices of vector valued weights $\boldsymbol{M}_1, \boldsymbol{M}_2, \ldots , \boldsymbol{M}_n$, and, for any weight $\boldsymbol{M}_i$, 
\[  X_{\boldsymbol{M}_i}=  \sum_{\alpha=1}^q \e{\beta M_{i, \alpha}}.\]
\end{enumerate}
\end{theorem}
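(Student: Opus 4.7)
The plan is to verify each of the three recursive identities directly from the definition
\[ Z(G) = \sum_{\sigma \in \mathcal{S}(G)} \exp\!\Bigl(\beta \sum_{\{i,j\}\in E} J_{i,j}\,\delta(\sigma_i,\sigma_j) + \beta\sum_{v_i\in V(G)}\sum_{\alpha=1}^{q} M_{i,\alpha}\,\delta(\alpha,\sigma_i)\Bigr), \]
and to do so by first rewriting the summand as a product:
\[ Z(G) = \sum_{\sigma} \;\prod_{\{i,j\}\in E} e^{\beta J_{i,j}\delta(\sigma_i,\sigma_j)}\;\prod_{v_i\in V(G)} e^{\beta M_{i,\sigma_i}}, \]
where for the vertex contribution I use that $\sum_{\alpha}M_{i,\alpha}\delta(\alpha,\sigma_i) = M_{i,\sigma_i}$. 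This product form reduces each of (\ref{zv1})--(\ref{zv2}) to bookkeeping on a single distinguished edge or set of isolated vertices.

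For item (\ref{zv2}), when $G=E_n$ has no edges, the edge product is empty, so the summand is $\prod_i e^{\beta M_{i,\sigma_i}}$; since the states decouple across vertices, the sum over $\sigma$ distributes as $\prod_{i=1}^n \sum_{\alpha=1}^q e^{\beta M_{i,\alpha}} = \prod_{i=1}^n X_{\boldsymbol{M}_i}$, which is exactly the base case. For item (\ref{zv3}), when $e=\{v_i,v_i\}$ is a loop, $\delta(\sigma_i,\sigma_i)=1$ identically, so the edge factor $e^{\beta J_{i,i}}$ pulls out of the sum over $\sigma$, and the remaining sum is precisely $Z(G-e)$.

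Item (\ref{zv1}) is the heart of the matter and I would prove it by the standard Fortuin--Kasteleyn trick, writing
\[ e^{\beta J_{i,j}\delta(\sigma_i,\sigma_j)} = 1 + \bigl(e^{\beta J_{i,j}}-1\bigr)\delta(\sigma_i,\sigma_j), \]
which holds because $\delta(\sigma_i,\sigma_j)\in\{0,1\}$. Substituting this into the edge product on the distinguished edge $e=\{v_i,v_j\}$ and distributing the sum over $\sigma$ splits $Z(G)$ into two pieces. The $1$-piece immediately equals $Z(G-e)$, since dropping the factor $e^{\beta J_{i,j}\delta(\sigma_i,\sigma_j)}$ is exactly what deleting the edge does to the product formula above. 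The $(e^{\beta J_{i,j}}-1)\delta(\sigma_i,\sigma_j)$-piece restricts the sum to states $\sigma$ with $\sigma_i=\sigma_j$, and I need to identify this restricted sum with $Z(G/e)$.

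The main (and only real) obstacle is this last identification, and it is where the contraction rule for vertex weights in Definition~\ref{dc} does its work. I would set up an explicit bijection between states of $G$ satisfying $\sigma_i=\sigma_j$ and states $\tilde\sigma$ of $G/e$, sending the merged vertex $v$ to the common value $\sigma_i=\sigma_j$. Under this bijection the edge product over $E(G)\setminus\{e\}$ matches the edge product for $G/e$ termwise. The vertex contribution for $v_i$ and $v_j$ in $G$ is $e^{\beta M_{i,\sigma_i}}\,e^{\beta M_{j,\sigma_j}} = e^{\beta(M_{i,\tilde\sigma_v}+M_{j,\tilde\sigma_v})}$ when $\sigma_i=\sigma_j=\tilde\sigma_v$, which is precisely the contribution of the merged vertex $v$ in $G/e$ carrying the summed magnetic field vector $\boldsymbol{M}_i+\boldsymbol{M}_j$ as prescribed by contraction. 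Hence the restricted sum equals $Z(G/e)$, completing (\ref{zv1}).
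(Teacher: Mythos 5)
Your proposal is correct and follows essentially the same route as the paper: both arguments reduce the vertex term to the factor $e^{\beta M_{i,\sigma_i}}$, isolate the distinguished edge, split off the states with $\sigma_i=\sigma_j$, and identify the restricted sum with $Z(G/e)$ by observing that $e^{\beta M_{i,\sigma_i}}e^{\beta M_{j,\sigma_j}}=e^{\beta(M_{i,\sigma_i}+M_{j,\sigma_i})}$ is exactly the contribution of the merged vertex with weight $\boldsymbol{M}_i+\boldsymbol{M}_j$. The only cosmetic difference is that you package the split via the identity $e^{\beta J_{i,j}\delta}=1+(e^{\beta J_{i,j}}-1)\delta$, whereas the paper performs the equivalent add-and-subtract on the equal-spin states explicitly.
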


\begin{proof} 
To prove Item~\ref{zv1} of the theorem, let $e=\{v_a,v_b\}$ be a non-loop edge of $G$. 
Then
\begin{multline*}
 Z(G) = 
\sum\limits_{\sigma \in \s (G) } \e{-\beta h(\sigma) }  
= \sum\limits_{\sigma \in \s (G) }  \e{-\beta \,j(\sigma)}  \left(  \prod_{v_i\in V(G)}    \e{  \sum_{\alpha}  \beta M_{i,\alpha}  \delta(\alpha,  \sigma_i) }\right)
\\
=    \sum\limits_{\sigma \in \s (G) }  \e{-\beta \,j(\sigma)}  \left(  \prod_{v_i\in V(G)}    \e{   \beta M_{i,\sigma_i} }\right)   .
\end{multline*}
The last equality holds since $M_{i, \alpha }\delta(\alpha,  \sigma_i) =  M_{i, \sigma_i}$ if $\alpha = \sigma_i$, and is  zero otherwise.

We can separate  the above sum by collecting together the states in which $\sigma_a \neq \sigma_b$, and in which $\sigma_a = \sigma_b$ to get
\[
\sum\limits_{\stackrel{\sigma \in \s (G)}{  \sigma_a \neq \sigma_b   }}  \e{-\beta \,j(\sigma)}  \left(  \prod_{v_i\in V(G)}    \e{   \beta M_{i,\sigma_i} } \right)    
+\sum\limits_{\stackrel{\sigma \in \s (G)}{  \sigma_a = \sigma_b   }}  \e{-\beta \,j(\sigma)}  \left(  \prod_{v_i\in V(G)}     \e{   \beta M_{i,\sigma_i} } \right).
\]
If $\sigma    $ is any state of $G$, then there is a unique state
$\sigma' $ of $G - e$ where each vertex has the same spin as it has in
$\sigma $.  Using this correspondence, the expression above then becomes
\[
 \sum\limits_{\stackrel{\sigma \in \s (G-e)}{\sigma_a\neq \sigma_b}}  \e{-\beta \,j(\sigma)}  \left(  \prod_{v_i\in V(G-e)}   \e{   \beta M_{i,\sigma_i} }  \right)    
+ \e{\beta J_e}
\sum\limits_{\stackrel{\sigma \in \s (G-e)}{  \sigma_a = \sigma_b   }}  \e{-\beta \,j(\sigma)}  \left(  \prod_{v_i\in V(G-e)}  \e{   \beta M_{i,\sigma_i} }   \right).
\]
The left hand sum in the above expression is nearly $Z(G-e)$, but we are missing the states of $G - e$ where $\sigma_a= \sigma_b$.  So we simply add and subtract these states, to obtain
\begin{equation}\label{eq1}
Z(G)= \sum\limits_{\sigma \in \s (G-e)}  \e{-\beta \,j(\sigma)}  \left(  \prod_{v_i\in V(G-e)}   \e{\beta M_{i, \sigma_i}}  \right)    
+ (\e{\beta J_e}-1)
\sum\limits_{\stackrel{\sigma \in \s (G-e)}{  \sigma_a = \sigma_b   }}  \e{-\beta \,j(\sigma)}  \left(  \prod_{v_i\in V(G-e)}   \e{\beta M_{i ,\sigma_i}}  \right).
\end{equation}
The left-hand side is precisely  $Z(G-e)$.

 We now turn our attention to expressing the right-hand sum as 
$Z(G/e)$.  If $\sigma  $ is any state of $G - e$ with $\sigma_a = \sigma_b   $, then there is a unique state $\sigma''$ of
$G/e$, where the vertex resulting from identifying $a$ and $b$ has the common
spin $\sigma_a = \sigma_b   $, and each remaining  vertex of $G/e$ has the
same spin as it does in $\sigma $.  We immediately have $j(\sigma)=j(\sigma'')$. Now for a state $\sigma$ with $\sigma_a = \sigma_b $, we have
\[ \prod_{v_i\in V(G-e)}   \e{\beta M_{i ,\sigma_i}}  = 
\left(\prod_{\stackrel{v_i\in V(G-e)}{i\neq a \text{ or } b}}   \e{\beta M_{i ,\sigma_i}} \right)  \e{\beta ( M_{a, \sigma_a} +M_{b,\sigma_b}  )  } 
=\left(\prod_{\stackrel{v_i\in V(G-e)}{i\neq a \text{ or } b}}   \e{\beta M_{i ,\sigma_i}} \right)  \e{\beta ( M_{a, \sigma_a} +M_{b,\sigma_a}  )  },
\]
%
where the last equality holds because $\sigma_a=\sigma_b$. Notice that $M_{a, \sigma_a} +M_{b,\sigma_a} $ is the $\sigma_a$-th term of the vector $\boldsymbol{M}_a  +\boldsymbol{M}_b$.
If $v_c$ is the vertex of $G/e$ resulting from identifying $v_a$ and $v_b$, then $v_c$ has weight  $\boldsymbol{M}_c=\boldsymbol{M}_a  +\boldsymbol{M}_b$. Also, in the state $\sigma'$ of $G/e$ induced by the state $\sigma$ of $G$ as described above, the spin of $v_c$ is equal to the spins of $v_a$ and $v_b$.  Using the fact that $\sigma_c=\sigma_b=\sigma_a$, so that $M_{a,\sigma_a}+M_{b, \sigma_a}=M_{c, \sigma_c}$,  we can write the above product as
 \[ \prod_{v_i\in V(G/e)}   \e{\beta M_{i ,\sigma_i} }   = Z(G/e).\]
Equation~\ref{eq1} then gives 
\[  Z(G)=Z(G-e)+ (\e{\beta J_e}-1) Z(G/e)\] 
as required.

\medskip

Now, to prove Item~\ref{zv3} of the theorem,  suppose that $e=\{v_a,v_a\}$ is a loop. 
Then 
\[  Z(G)=
 \sum\limits_{\sigma \in \s (G) }  \e{\beta  \sum_{  \{i,j\} \in E(G)} J_{i,j}\, \delta(\sigma_i, \sigma_j)} \left(  \prod_{v_i\in V(G)}    \e{   \beta M_{i,\sigma_i} }\right)   \]
Since $e$ is a loop, both its ends are assigned the same spin in each state, so we can write the partition function as
\[   \e{\beta J_{a,a}}  \sum_{\sigma \in \s(G-e)}    \e{\beta  \sum_{\{i,j\} \in E(G-e)} J_{i,j}\,\delta_{i,j} }  \left(  \prod_{v_i\in V(G-e)}   \e{\beta M_{i, \sigma_i}}  \right) ,\]
where we have identified the states of $G$ and $G-e$ in the way described in the proof of Item~\ref{zv1} above. Thus, $Z(G)=\e{\beta J_{a,a}}  Z(G-e)$ as required.

\medskip

Finally, if $G$ consists of a single isolated vertex, it is easily checked that $Z(G)= \sum_{\alpha=1}^q \e{\beta M_{1, \alpha}}$. Item~(\ref{zv2}) then follows by the multiplicativity  of the partition function. 
\end{proof}

The following theorem is the main result of this paper. It states that the Potts model partition function with a variable external magnetic field and variable edge interaction is an evaluation of the $\V$-polynomial.

\begin{theorem}\label{ZV2} Let $G$ be a graph equipped with a magnetic field vector  $\boldsymbol{M}_i=(M_{i,1}, \ldots , M_{i,q})\in \mathbb{C}^q$ at each vertex $v_i$, and suppose
 \[ 
h(\sigma) = - \sum_{ \{ i,j \} \in E }  J_{i,j}  \delta(\sigma_i, \sigma_j)  - \sum_{v_i\in V(G)}    \sum_{\alpha =1}^q M_{i,\alpha}  \delta(\alpha,  \sigma_i).
 \]
Then 
\[  Z(G)=\V\left( G,\omega ; \;\{X_{\boldsymbol{M}}\}_{\boldsymbol{M}\in \mathbb{C}^q}    ,\;   \{  \e{\beta J_{i,j}}-1   \}_{\{i,j\}\in E(G)}      \right),\]
where the vertex weights  are given by $\omega (v_i) =\boldsymbol{M}_i$ 
 and, for any $\boldsymbol{M} = (M_1, \ldots , M_q) \in \mathbb{C}^q$, 
\[  X_{\boldsymbol{M}}=  \sum_{\alpha=1}^q \e{\beta M_{\alpha}}.\]
\end{theorem}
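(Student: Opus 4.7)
The plan is to use the recipe theorem for $\V$ (Theorem~\ref{vrecp}) together with the deletion-contraction relations already established for $Z(G)$ in Theorem~\ref{ZV}. Essentially all of the work has already been done: Theorem~\ref{ZV} shows that $Z(G)$ behaves exactly as a recursively defined function of the type that Theorem~\ref{vrecp} is designed to handle, so the proof should amount to identifying the parameters $\alpha_e$, $\beta_e$, and the vertex weight variables $x_{\omega_i}$ that make the two recursions match.

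First I would set $\alpha_e = 1$ and $\beta_e = \e{\beta J_e} - 1$ for every edge $e \in E(G)$. With this choice, the non-loop deletion-contraction relation from Theorem~\ref{ZV}, namely $Z(G) = Z(G-e) + (\e{\beta J_{i,j}} - 1)\, Z(G/e)$, matches $f(G) = \alpha_e f(G-e) + \beta_e f(G/e)$ on the nose. The loop relation requires a short check: we have $\alpha_e + \beta_e = 1 + (\e{\beta J_{i,i}} - 1) = \e{\beta J_{i,i}}$, which is exactly the multiplier appearing in Theorem~\ref{ZV}\,(\ref{zv3}). For the base case, I would take the vertex weights to be the magnetic field vectors themselves, $\omega(v_i) := \boldsymbol{M}_i$, and declare the corresponding variables to be $x_{\boldsymbol{M}} := X_{\boldsymbol{M}} = \sum_{\alpha=1}^q \e{\beta M_\alpha}$; this makes $Z(E_n) = \prod_{i=1}^n X_{\boldsymbol{M}_i}$ agree with $f(E_n) = \prod_{i=1}^n x_{\omega_i}$.

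With these identifications in place, Theorem~\ref{vrecp} applies directly to $f = Z$, and its conclusion reads
\[
Z(G) \;=\; \Bigl(\prod_{e\in E(G)} 1\Bigr)\, \V\!\left(G,\omega;\; \{X_{\boldsymbol{M}}\}_{\boldsymbol{M}\in\mathbb{C}^q},\; \{\beta_e/\alpha_e\}_{e\in E(G)}\right) \;=\; \V\!\left(G,\omega;\; \{X_{\boldsymbol{M}}\}_{\boldsymbol{M}\in\mathbb{C}^q},\; \{\e{\beta J_{i,j}} - 1\}_{\{i,j\}\in E(G)}\right),
\]
which is precisely the claimed identity.

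There is no real obstacle, since Theorem~\ref{ZV} already contains all the substantive combinatorics (in particular the delicate merging of magnetic field vectors $\boldsymbol{M}_c = \boldsymbol{M}_a + \boldsymbol{M}_b$ under contraction, which mirrors how $\omega$ is updated in Definition~\ref{dc}); what remains here is purely a matching-of-parameters argument. The only small point worth mentioning explicitly is that the vertex-weight semigroup for this application is $\mathbb{C}^q$ under componentwise addition, and the index set for the $\boldsymbol{x}$ variables of $\V$ must be taken to contain every $\boldsymbol{M}_i$ together with every sum of the $\boldsymbol{M}_i$'s that can arise by contracting a connected subset of vertices; this is automatic since we are free to index by all of $\mathbb{C}^q$. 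As an alternative to invoking Theorem~\ref{vrecp}, one could prove the identity directly by induction on $|E(G)|$, using Theorem~\ref{ZV} for the inductive step and the base case of isolated vertices, but the recipe-theorem route is cleaner.
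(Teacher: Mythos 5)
Your proposal is correct and follows exactly the paper's route: the paper also deduces Theorem~\ref{ZV2} immediately from Theorem~\ref{ZV} together with the recipe theorem (Theorem~\ref{vrecp}), with the same identifications $\alpha_e=1$, $\beta_e=\e{\beta J_e}-1$, and $\omega(v_i)=\boldsymbol{M}_i$. Your explicit check of the loop case and the remark about the indexing semigroup $\mathbb{C}^q$ are exactly the details the paper leaves implicit.
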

\begin{proof}
The equality of the $Z(G)$ and the $\V$-polynomial follows immediately from Theorem~\ref{ZV} and the recipe theorem for the $\V$-polynomial (Theorem~\ref{vrecp}). 
\end{proof}

As a corollary to this theorem we obtain a Fortuin-Kasteleyn-type representation for the Potts model  with variable external magnetic field and variable edge interaction.
 
\begin{corollary}\label{c.ZV1}
Let $G$ be a graph equipped with a magnetic field vector  $\boldsymbol{M}_i=(M_{i,1}, \ldots , M_{i,q})\in \mathbb{C}^q$ at each vertex $v_i$, 
 \[ 
h(\sigma) = - \sum_{ \{ i,j \} \in E }  J_{i,j}  \delta(\sigma_i, \sigma_j)  - \sum_{v_i\in V(G)}    \sum_{\alpha =1}^q M_{i,\alpha}  \delta(\alpha,  \sigma_i).
\]
Then 
\[   Z(G)= \sum_{A\subseteq E(G)}  X_{\boldsymbol{M}_{C_1}} \cdots X_{\boldsymbol{M}_{C_{k(A)}}} \left( \prod_{e\in A}  (\e{\beta J_e}-1) \right),\]
where $\boldsymbol{M}_{C_l}$ is the sum of the weights, $\boldsymbol{M}_i$, of all of the vertices $v_i$ in the $l$-th connected component of the spanning subgraph $(V(G), A)$, and  $X_{\boldsymbol{M}}=      \sum_{\alpha=1}^q \e{\beta M_{\alpha}}$.

\end{corollary}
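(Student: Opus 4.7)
The plan is to deduce the claim as a direct consequence of Theorem~\ref{ZV2} combined with the state sum representation of the $\V$-polynomial in Theorem~\ref{t.vstatesum}; no new combinatorial argument should be needed. First I would invoke Theorem~\ref{ZV2} to rewrite
\[
Z(G) = \V\left(G, \omega;\; \{X_{\boldsymbol{M}}\}_{\boldsymbol{M}\in\mathbb{C}^q},\; \{e^{\beta J_{i,j}} - 1\}_{\{i,j\}\in E(G)}\right),
\]
where the vertex weight at $v_i$ is the magnetic field vector $\boldsymbol{M}_i$. This reduces the problem to unpacking the right-hand side.

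Next I would apply the spanning subgraph expansion of $\V$ from Theorem~\ref{t.vstatesum}, which states
\[
\V(G) = \sum_{A\subseteq E(G)} x_{c_1} x_{c_2}\cdots x_{c_{k(A)}} \prod_{e\in A} \gamma_e,
\]
where $c_l$ is the sum of the vertex weights in the $l$-th connected component of $(V(G),A)$. The substitution dictated by Theorem~\ref{ZV2} sends $\gamma_e \mapsto e^{\beta J_e} - 1$ and sends $x_k \mapsto X_{\boldsymbol{M}}$ whenever the index $k$ equals the semigroup element $\boldsymbol{M}\in\mathbb{C}^q$. Under this substitution, the component index $c_l$ becomes $\boldsymbol{M}_{C_l} := \sum_{v_i\in C_l} \boldsymbol{M}_i$, and hence the product $x_{c_1}\cdots x_{c_{k(A)}}$ becomes $X_{\boldsymbol{M}_{C_1}}\cdots X_{\boldsymbol{M}_{C_{k(A)}}}$.

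Assembling these pieces yields precisely the stated formula
\[
Z(G) = \sum_{A\subseteq E(G)} X_{\boldsymbol{M}_{C_1}}\cdots X_{\boldsymbol{M}_{C_{k(A)}}} \left(\prod_{e\in A}(e^{\beta J_e} - 1)\right).
\]
There is no real obstacle here beyond bookkeeping: the only thing to check carefully is that the vertex weight semigroup $\mathbb{C}^q$ under addition is compatible with the indexing convention of Theorem~\ref{t.vstatesum}, so that component sums $\boldsymbol{M}_{C_l}$ genuinely appear as valid indices for the variables $\{X_{\boldsymbol{M}}\}_{\boldsymbol{M}\in\mathbb{C}^q}$. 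Since the variable family is indexed over all of $\mathbb{C}^q$, this compatibility is immediate and the corollary follows.
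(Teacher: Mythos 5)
Your proposal is correct and is exactly the paper's own argument: the authors likewise obtain the corollary immediately by combining Theorem~\ref{ZV2} with the spanning subgraph expansion of Theorem~\ref{t.vstatesum}. Your extra remark about the component sums being valid indices in the semigroup $\mathbb{C}^q$ is the right (and only) point of bookkeeping to check.
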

\begin{proof}
The result follows immediately from Theorem~\ref{ZV2} and Theorem~\ref{t.vstatesum}. 
\end{proof}

Since the $\V$-polynomial of a graph $G$ is a polynomial in $\mathbb{Z} [  \{x_k\} ,\{\gamma_e\}] $, we can  use the relation between the $\V$-polynomial and the Potts model to express the  $q$-spin Potts model partition function  with variable edge interaction energy and variable magnetic field as a polynomial.

\begin{corollary} For a connected graph $G$ equipped with a magnetic field vector  $\boldsymbol{M}_i=(M_{1,1}, \ldots , M_{i,q}) \in \mathbb{C}^q$ at each vertex $v_i$, the $q$-spin Potts partition function  with variable edge interaction energy and variable magnetic field is a polynomial in the variables 

\[
\left\{\boldsymbol{v}, X_{\boldsymbol{M}} | \boldsymbol{M} \in  \mathcal{M} \right\},
\]
where $\boldsymbol{v} = \{ \e{\beta J_{e}}-1 \}_{e\in E(G)}$, and $X_{\boldsymbol{M}} = \sum_{\alpha=1}^q \e{\beta M_{\alpha}} $ for
\newline
 $\mathcal{M}:=\left.\left\{   \sum_{i=1}^{|V(G)|}  \varepsilon_i \boldsymbol{M}_i  \right|  \varepsilon_i = 0 \text{ or } 1  \right\} $.
\end{corollary}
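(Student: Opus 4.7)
The plan is to read the result off directly from the Fortuin-Kasteleyn style representation given in Corollary \ref{c.ZV1}, which already expresses $Z(G)$ as an explicit state sum. The key observation is that each term in that sum is manifestly a monomial in the proposed variables, provided one can identify the component weights $\boldsymbol{M}_{C_l}$ as lying in the indexing set $\mathcal{M}$.

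First, I would recall that Corollary \ref{c.ZV1} gives
\[ Z(G)= \sum_{A\subseteq E(G)}  X_{\boldsymbol{M}_{C_1}} \cdots X_{\boldsymbol{M}_{C_{k(A)}}} \prod_{e\in A}  (\e{\beta J_e}-1), \]
where $\boldsymbol{M}_{C_l}$ denotes the sum of the vertex weights $\boldsymbol{M}_i$ over the vertices $v_i$ in the $l$-th connected component of the spanning subgraph $(V(G),A)$. The factors $\e{\beta J_e}-1$ are precisely the entries of $\boldsymbol{v}$, so the product $\prod_{e \in A}(\e{\beta J_e}-1)$ is a (squarefree) monomial in $\boldsymbol{v}$.

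Next, I would verify that each $\boldsymbol{M}_{C_l}$ belongs to $\mathcal{M}$. Since the connected components of the spanning subgraph $(V(G),A)$ partition the vertex set $V(G)$, each vertex $v_i$ lies in exactly one $C_l$. Defining $\varepsilon_i^{(l)} = 1$ if $v_i \in C_l$ and $\varepsilon_i^{(l)} = 0$ otherwise, we have
\[ \boldsymbol{M}_{C_l} = \sum_{v_i\in C_l} \boldsymbol{M}_i = \sum_{i=1}^{|V(G)|} \varepsilon_i^{(l)} \boldsymbol{M}_i, \]
which is exactly an element of $\mathcal{M}$. Consequently every factor $X_{\boldsymbol{M}_{C_l}}$ in the state sum is one of the indeterminates $X_{\boldsymbol{M}}$ with $\boldsymbol{M}\in\mathcal{M}$.

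Combining these two observations, every summand in the Fortuin-Kasteleyn expression is a monomial in $\boldsymbol{v}\cup\{X_{\boldsymbol{M}}\}_{\boldsymbol{M}\in\mathcal{M}}$, and therefore $Z(G)$ is a polynomial in these variables. There is no serious obstacle here: the proof is essentially a bookkeeping consequence of Corollary \ref{c.ZV1}, and the only thing to check is the set-membership statement $\boldsymbol{M}_{C_l}\in\mathcal{M}$, which is immediate from the partition of $V(G)$ into components. The connectedness hypothesis on $G$ is not actually needed for the argument; it plays no role beyond framing the statement.
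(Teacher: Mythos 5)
Your proof is correct and follows essentially the same route as the paper, which derives this corollary by combining the identification $Z(G)=\V(G,\omega;\ldots)$ of Theorem~\ref{ZV2} with the polynomiality of $\V$ (equivalently, by reading the monomials off the Fortuin--Kasteleyn expansion of Corollary~\ref{c.ZV1}, exactly as you do). Your observation that each component weight $\boldsymbol{M}_{C_l}$ lies in $\mathcal{M}$ via indicator coefficients $\varepsilon_i$ is the only point needing verification, and you have it right.
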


This result may be extended to graphs with more than one component by using the multiplicativity of $\V$.



\section{Some specializations}\label{s.special}

As noted in the introduction, the $\V$-polynomial provides a unifying framework for Potts models with various Hamiltonians. Here we highlight some special forms of the magnetic weight vectors and resulting Hamiltonian that are frequently studied in the physics literature. We show how the associated partition functions are evaluations of the $\V$-polynomial and its specializations. This implies that these important models are also polynomials with deletion-contraction reductions and Fortuin-Kastelan-type representations. We also show how Theorem~\ref{ZV2} can be applied to the various Hamiltonians to give a hierarchy of relations among the resulting partition functions and the $W$-, $U$-, multivariate Tutte, or Tutte polynomials. These graph polynomials are all specializations of the $\V$-polynomial.

\subsection{A hierarchy of specializations}

The following result says that if we restrict to the Potts model with constant interaction energies and where all the magnetic field vectors are positive integer multiples of a given magnetic field vector, then the partition function is described by the $W$-polynomial.

\begin{theorem}\label{h1} Let $\boldsymbol{B}\in \mathbb{C}^q$, and let  $G$ be a graph equipped with a magnetic field vector  
$\boldsymbol{M}_i:=k_i\boldsymbol{B} = k_i (B_{1}, \ldots , B_{q}) $ at each vertex $v_i$, where $k_i\in \mathbb{Z}^+$. If the Hamiltonian is
\[ h(\sigma) = - J \sum_{ \{ i,j \} \in E(G) }    \delta(\sigma_i, \sigma_j)  - \sum_{v_i\in V(G)}    \sum_{\alpha =1}^q k_iB_{\alpha}  \delta(\alpha,  \sigma_i),
 \]
then 
\[  Z(G)=  (\e{\beta J}-1)^{|V(G)|}  W\left( G,\omega ; \;\{x_{k}\}_{k\in \mathbb{Z}^+}    ,\;    \e{\beta J}       \right),\]
where
\[  x_k
= \frac{1}{    \e{\beta J}-1  }\left(\sum_{\alpha=1}^q \e{\beta k B_{ \alpha}} \right),
\]
and $B_{\alpha}$ is the $\alpha$-entry of $\boldsymbol{B}$.
\end{theorem}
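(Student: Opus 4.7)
The plan is to obtain this specialization as a straightforward two-step application of earlier results: first Theorem~\ref{ZV2} converts the partition function into a $\V$-polynomial evaluation, and then Theorem~\ref{vprops}(1) converts the $\V$-polynomial into a $W$-polynomial evaluation. The hypothesis of constant interaction energy $J$ is exactly what collapses all edge weights of the $\V$-polynomial to a single value $y-1$ with $y=\e{\beta J}$, which is the form required by Theorem~\ref{vprops}(1), and the hypothesis that each $\boldsymbol{M}_i$ is a positive integer multiple of a fixed vector $\boldsymbol{B}$ is exactly what reduces the vector vertex weights to positive integer vertex weights, which is the setting where the $W$-polynomial is defined.

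More precisely, I would first apply Theorem~\ref{ZV2} to write
\[
Z(G) \;=\; \V\bigl( G, \omega;\;\{X_{\boldsymbol{M}}\}_{\boldsymbol{M}\in\mathbb{C}^q},\;\{\e{\beta J}-1\}_{e\in E(G)}\bigr),
\]
where $\omega(v_i)=k_i\boldsymbol{B}$. Next I would observe that the sub-semigroup $\mathbb{Z}^+\boldsymbol{B}\subseteq\mathbb{C}^q$ is closed under the addition operation used to redefine weights upon contraction (since $k_i\boldsymbol{B}+k_j\boldsymbol{B}=(k_i+k_j)\boldsymbol{B}$), so every vertex weight that arises during the recursive computation of $\V(G)$ is of the form $k\boldsymbol{B}$ for some $k\in\mathbb{Z}^+$. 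Therefore only the variables $X_{k\boldsymbol{B}}$ with $k\in\mathbb{Z}^+$ actually appear, and I can identify the index $k\boldsymbol{B}$ with $k$ so that the vertex weights lie in $\mathbb{Z}^+$ as required by the $W$-polynomial.

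With this reindexing in place and all edge weights equal to $\e{\beta J}-1$, Theorem~\ref{vprops}(1) (taking $y=\e{\beta J}$) yields
\[
\V\bigl(G,\omega;\{X_{k\boldsymbol{B}}\}_{k\in\mathbb{Z}^+},\e{\beta J}-1\bigr)
\;=\;(\e{\beta J}-1)^{|V(G)|}\,W\!\left(G,\omega;\left\{\tfrac{X_{k\boldsymbol{B}}}{\e{\beta J}-1}\right\}_{k\in\mathbb{Z}^+},\e{\beta J}\right),
\]
and substituting $X_{k\boldsymbol{B}}=\sum_{\alpha=1}^q \e{\beta k B_\alpha}$ gives exactly the stated formula with $x_k=\frac{1}{\e{\beta J}-1}\sum_{\alpha=1}^q \e{\beta k B_\alpha}$.

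The only step that requires any care, and hence the mildest of obstacles, is the reindexing of the vertex weights from the sub-semigroup $\mathbb{Z}^+\boldsymbol{B}\subseteq\mathbb{C}^q$ to $\mathbb{Z}^+$. One needs to confirm that this identification is legitimate in the sense of Definition~\ref{mvw}, namely that every possible sum of initial vertex weights in a component is still indexed by a variable in the polynomial, but this is immediate from closure of $\mathbb{Z}^+$ under addition. Everything else is a direct appeal to previously proved results, so no further machinery is needed.
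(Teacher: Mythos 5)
Your proposal is correct and follows essentially the same route as the paper's own proof: specialize Theorem~\ref{ZV2} to constant $J$, observe that all vertex weights arising in the recursion lie in the sub-semigroup $\mathbb{Z}^+\boldsymbol{B}$ so the indexing set may be taken to be $\mathbb{Z}^+$, and then apply Theorem~\ref{vprops}(1). Your explicit remark that $\mathbb{Z}^+\boldsymbol{B}$ is closed under the addition performed at contraction is the one point the paper leaves implicit, and it is handled correctly.
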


\begin{proof}
Setting each $J_{i,j}=J$ in Theorem~\ref{ZV2} gives 
\[  Z(G)=  \V \left( G,\omega ; \;\{X_{\boldsymbol{M}}\}_{\boldsymbol{M}\in \mathbb{Z^+}^q}    ,\;   \gamma_e= \e{\beta J}-1       \right),\]
where the $X_{\boldsymbol{M}}$ are defined in Theorem~\ref{ZV2}.
Each index $\boldsymbol{M}$, of $X_{\boldsymbol{M}}$ , is then of the form $k\boldsymbol{B}$ for some $k\in \mathbb{Z}^+$. Therefore $X_{\boldsymbol{M}} =X_{k\boldsymbol{B}}= \sum_{\alpha=1}^q \e{\beta k B_{ \alpha}} $, where $B_{\alpha}$ is the $\alpha$-entry of $\boldsymbol{B}$. 

Also since every index is of the form $k\boldsymbol{B}$, the indexing set is isomorphic to $\mathbb{Z}^+$.  Thus, we can choose our indexing set to be  $\mathbb{Z}^+$, letting $X_k
= \sum_{\alpha=1}^q \e{\beta k B_{ \alpha}}$.   
Applying Theorem~\ref{vprops} then gives 
\[  Z(G)=  (\e{\beta J}-1)^{|V(G)|} W\left( G,\omega ; \;\{x_k\}_{k\in \mathbb{Z}^+}    ,\;    \e{\beta J}       \right),\]
where $x_{k} = \left(\sum_{\alpha=1}^q \e{\beta k B_{ \alpha}}\right) /  \left(    \e{\beta J}-1  \right) $.
\end{proof}

If, in addition to constant edge interaction energy, all of the magnetic field vectors are fixed, then the Potts model partition function can be recovered from the $U$-polynomial. 

\begin{theorem}\label{h3} Let  $G$ be a graph equipped with a fixed magnetic field vector  
$\boldsymbol{B} = (B_1, \ldots , B_{\alpha})$ at each vertex, and Hamiltonian
\[ h(\sigma) = - J \sum_{ \{ i,j \} \in E(G) }    \delta(\sigma_i, \sigma_j)  - \sum_{v_i\in V(G)}    \sum_{\alpha =1}^q B_{\alpha}  \delta(\alpha,  \sigma_i),
 \]
then 
\[  Z(G)=  (\e{\beta J}-1)^{|V(G)|}  U\left( G,\omega ; \;\{x_{k}\}_{k\in \mathbb{Z}^+}    ,\;    \e{\beta J}-1       \right),\]
where
\[  x_k
= \frac{1}{    \e{\beta J}-1  }\left(\sum_{\alpha=1}^q \e{\beta k B_{ \alpha}} \right),
\]
and $B_{\alpha}$ is the $\alpha$-entry of $\boldsymbol{B}$.

\end{theorem}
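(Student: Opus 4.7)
The plan is to deduce Theorem \ref{h3} as the specialization of Theorem \ref{h1} in which every initial multiplier $k_i$ equals $1$, followed by recognizing the resulting $W$-polynomial as a $U$-polynomial. First I would observe that the Hamiltonian in the statement is precisely the Hamiltonian of Theorem \ref{h1} under the substitution $\boldsymbol{M}_i = 1\cdot\boldsymbol{B}$ for every vertex $v_i$. Applying Theorem \ref{h1} verbatim then expresses $Z(G)$ as $(\e{\beta J}-1)^{|V(G)|}$ times the $W$-polynomial of $G$ equipped with the constant vertex weighting $\omega(v_i)=1$, evaluated at exactly the indeterminates $x_k = (\e{\beta J}-1)^{-1}\sum_{\alpha=1}^{q}\e{\beta k B_\alpha}$ appearing in the target formula.

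The second step would be to invoke the definition of the $U$-polynomial recalled at the end of Section \ref{s.w}: for an unweighted graph $G$, the polynomial $U(G;\boldsymbol{x},y)$ is by definition the polynomial $W(G_1;\boldsymbol{x},y)$, where $G_1$ denotes $G$ with weight $1$ assigned to every vertex. Since the $W$-polynomial appearing in our expression is being evaluated on exactly such a weighted graph, I may replace $W(G,\omega;\ldots)$ by $U(G;\ldots)$, and this yields the identity claimed in the theorem.

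I do not expect any substantive obstacle. The theorem is essentially a dictionary translation once Theorem \ref{h1} is in hand, and the only subtlety worth a brief check is that the additive contraction in Definition \ref{dc}, applied to a graph with all initial weights equal to $1$, produces exactly the positive integer vertex weights that index the variables $x_k$ in the $U$-polynomial. This is immediate by comparing Definitions \ref{dc} and \ref{def.Wpoly}, so the entire argument reduces to citing Theorem \ref{h1} and unwinding the definition of $U$.
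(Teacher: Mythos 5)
Your argument is exactly the paper's proof: specialize Theorem \ref{h1} to $k_i=1$ for every vertex and then identify the resulting $W$-polynomial on the all-ones weighting with the $U$-polynomial by its definition at the end of Section \ref{s.w}. (The only wrinkle is that this route produces the second argument $\e{\beta J}$ as in Theorem \ref{h1}, so the $\e{\beta J}-1$ appearing in that slot in the statement of Theorem \ref{h3} is evidently a typographical slip in the paper, not a defect of your proof.)
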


\begin{proof}
The result follows easily from Theorem~\ref{h1} upon noting that the vector $1\boldsymbol{B}$ is the vertex weight of each vertex of $G$, and recalling that initial weights of 1 on all vertices is the defining property of $U$.

\end{proof}

The well known classical results which relate the Potts model partition function and the Tutte or multivariate Tutte polynomial in the cases when $M=0$ may also be recovered from Theorem~\ref{ZV} and Theorem~\ref{vprops} as follows.

\begin{theorem} \label{classical Tutte} Let $G$ be a graph. 
\begin{enumerate}
\item If $h(\sigma) =  -\sum_{ \{ i,j \} \in E }  J_{i,j} \delta(\sigma_i, \sigma_j)  $, then
\[Z(G)=Z_T( G;  q, \{ \e{\beta {J_{i,j}}} -1 \}_{\{i,j\}\in E(G)}) ;\]

\item and if $h(\sigma) =  -J \sum_{ \{ i,j \} \in E }   \delta(\sigma_i, \sigma_j)  $, then
\[Z(G) =
q^{k(G)} v^{|V(G)|-k(G)} T(G;(q + v)/v,v + 1),\] where $v = e^{\beta J}-1$.

\end{enumerate}
\end{theorem}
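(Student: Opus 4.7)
The plan is to derive both items as specializations of Theorem~\ref{ZV2}, by turning off the magnetic field.

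First I would treat item~(1) by setting $\boldsymbol{M}_i = \boldsymbol{0}$ at every vertex of $G$. Then in the state sum formula for $\V$ (Theorem~\ref{t.vstatesum}) the vertex weight $c_l$ of every component is the zero vector, so every index that actually appears in the $\V$-polynomial is the zero index, and the corresponding variable collapses to
\[ X_{\boldsymbol{0}} = \sum_{\alpha=1}^q e^{\beta\cdot 0} = q. \]
Consequently Theorem~\ref{ZV2} becomes
\[ Z(G) = \V\bigl(G,\omega;\; x_i = q,\; \{e^{\beta J_{i,j}}-1\}_{\{i,j\}\in E(G)}\bigr), \]
and item~(2) of Theorem~\ref{vprops}, which says that $\V$ with all $x_i=\theta$ equals the multivariate Tutte polynomial $Z_T(G;\theta,\boldsymbol{\gamma})$, immediately yields item~(1).

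For item~(2) I would then specialize item~(1) further by setting $J_{i,j}=J$ for every edge, so that every edge weight becomes $\gamma_e = v := e^{\beta J}-1$, giving
\[ Z(G) = Z_T(G;\, q,\, v\mathbf{1}) = \sum_{A\subseteq E(G)} q^{k(A)} v^{|A|}. \]
It then remains to verify the classical identity
\[ \sum_{A\subseteq E(G)} q^{k(A)} v^{|A|} = q^{k(G)} v^{|V(G)|-k(G)} T\bigl(G;\,\tfrac{q+v}{v},\,v+1\bigr). \]
I would establish this by substituting into the rank-nullity expansion $T(G;x,y)=\sum_A (x-1)^{r(G)-r(A)}(y-1)^{|A|-r(A)}$, using $r(A)=|V(G)|-k(A)$ and $r(G)=|V(G)|-k(G)$, and collecting powers of $q$ and $v$; the exponents match the left-hand side term by term.

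I do not expect a real obstacle: everything follows directly from Theorem~\ref{ZV2}, Theorem~\ref{vprops}, and the standard rank-nullity manipulation. The one point that deserves explicit mention is why \emph{every} index $\boldsymbol{M}$ that can actually contribute to the $\V$-evaluation is $\boldsymbol{0}$; this is because sums of zero vectors over components are zero, so no other $X_{\boldsymbol{M}}$ is ever needed.
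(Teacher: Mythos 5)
Your proposal is correct and follows exactly the route the paper intends: the paper gives no written proof of this theorem, stating only that it "may be recovered from Theorem~\ref{ZV} and Theorem~\ref{vprops}," and your argument (zero field gives $X_{\boldsymbol{0}}=q$, then Theorem~\ref{vprops}(2) collapses $\V$ to $Z_T$, then the standard rank--nullity change of variables for constant $J$) is precisely that recovery. The only remark worth adding is that your careful observation about all occurring indices being $\boldsymbol{0}$ is not strictly needed, since Theorem~\ref{vprops}(2) sets \emph{every} $x_i=\theta$ independently of the weights.
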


\bigskip

From the preceding theorems, we have the following hierarchy of relations among the $\V$-, $W$-, $U$-, and Tutte polynomials, and the partition functions with Hamiltonians of various degrees of generality. This hierarchy holds in the case of the indiscriminately preferred spins given in Theorems \ref{h1} through \ref{classical Tutte}, and adapts to the single preferred spin models of Theorems \ref{ZV4} through \ref{ZV5},  presumably with other models following a similar pattern as well.

{\small   
\begin{diagram}
 && & & \V   & \lInto &  \begin{array}{l}  \\ Z \text{ with variable interaction  }\\\text{energies and variable field} \end{array}  \\ 
 &&&\ruInto(2,3)&  \uInto && \\
 &&&  &W &\lInto &   \begin{array}{l}  \\ Z \text{ with constant interaction }\\\text{energies and integer scaled field} \end{array}  \\\ 
   \begin{array}{l} \\ Z \text{ with variable interaction }\\\text{energies and zero field} \end{array}  &\rInto&Z_T&& \uInto &&\\
 &&&  \luInto(2,3) & U	&\lInto &   \begin{array}{l} \\ Z  \text{ with constant interaction}\\\text{energies and constant field} \end{array}  \\
 &&& &\uInto&&\\
&&& &T  & \lInto & \begin{array}{l} \\ Z\text{ with constant interaction }\\\text{energies and zero field} \end{array}   
  \end{diagram}
}

\subsection{Some more examples}
There many variations of the Hamiltonian, having far too many forms to address here.  Thus, we select the following frequently studied forms of the Hamiltonian simply to illustrate how the theory presented here may be adapted to other settings.

One example of a Hamiltonian that is of particular interest in the physics literature models a system with an external field in which one particular spin (the first, without loss of generality) is preferred (see for example the surveys \cite{Ber05, Wu82}).  Here again, variable interaction energies and magnetic field values are allowed. In this case we have the following relation with the $\V$-polynomial. 
 
\begin{theorem}\label{ZV4}

Suppose a complex value $z_i$ is associated to each vertex $v_i$ of a graph $G$, and the Hamiltonian is given by
 \[ h(\sigma) = - \sum_{ \{ i,j \} \in E }  J_{i,j}  \delta(\sigma_i, \sigma_j)  - \sum_{v_i\in V(G)} z_i  \delta(1,  \sigma_i).
 \]
Then 
\[  Z(G)=\V\left( G,\omega ; \;\{X_z\}_{z\in \mathbb{C}}    ,\;   \{  \e{\beta J_e}-1   \}_{e \in E(G)}      \right),\]
where the vertex weights  are given by $\omega (v_i) =z_i$ 
 and 
\[  X_z=   \e{\beta z}+q-1.\]
\end{theorem}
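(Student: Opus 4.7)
The plan is to recognize this Hamiltonian as a specialization of the generic Hamiltonian of Definition~\ref{general Hamiltonian} and then invoke Theorem~\ref{ZV2}. Specifically, I would assign to each vertex $v_i$ the magnetic field vector
\[ \boldsymbol{M}_i := (z_i, 0, 0, \ldots, 0) \in \mathbb{C}^q. \]
With this choice,
\[ \sum_{\alpha=1}^q M_{i,\alpha}\, \delta(\alpha, \sigma_i) = z_i\, \delta(1, \sigma_i), \]
so the generic Hamiltonian reduces precisely to the Hamiltonian of the present theorem. Theorem~\ref{ZV2} then applies directly.

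The only thing to verify carefully is that the vertex weights, which a priori live in $\mathbb{C}^q$, may be indexed simply by scalars $z \in \mathbb{C}$ in the statement. This is clear because the set $\{(z,0,\ldots,0) : z \in \mathbb{C}\} \subseteq \mathbb{C}^q$ is closed under addition, and under the contraction rule of Definition~\ref{dc} the vertex weight $(z_a, 0, \ldots, 0) + (z_b, 0, \ldots, 0) = (z_a + z_b, 0, \ldots, 0)$ is again of this form. Hence all weights arising through deletion-contraction remain in this subsemigroup, and we may identify the indexing set with $\mathbb{C}$ via $(z, 0, \ldots, 0) \leftrightarrow z$, taking $\omega(v_i) = z_i$.

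Finally, I would compute the value of $X_{\boldsymbol{M}}$ in Theorem~\ref{ZV2} under this identification. For any $\boldsymbol{M} = (z, 0, \ldots, 0)$,
\[ X_{\boldsymbol{M}} = \sum_{\alpha=1}^q \e{\beta M_\alpha} = \e{\beta z} + \sum_{\alpha=2}^q \e{0} = \e{\beta z} + (q-1), \]
which matches the stated $X_z$. Substituting this into the conclusion of Theorem~\ref{ZV2}, together with the edge weights $\gamma_e = \e{\beta J_e} - 1$ (unchanged since the edge-interaction part of $h(\sigma)$ is identical in both settings), yields the desired identity.

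There is no real obstacle here beyond the bookkeeping of the reindexing step, since the theorem is essentially a corollary of Theorem~\ref{ZV2}. If anything, the subtlest point is checking that the semigroup structure used in Definition~\ref{mvw} is respected by restricting to the one-dimensional subspace of $\mathbb{C}^q$, but this follows from inspection.
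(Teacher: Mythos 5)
Your proposal is correct and follows essentially the same route as the paper: embed the scalar field as $\boldsymbol{M}_i=(z_i,0,\ldots,0)$, apply Theorem~\ref{ZV2}, observe that only indices of this form arise (being closed under the addition performed by contraction), and reindex by $\mathbb{C}$ to get $X_z=\e{\beta z}+q-1$. Your explicit check that the one-dimensional subspace is a subsemigroup is a slightly more careful justification of the reindexing than the paper's, but the argument is the same.
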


\begin{proof}

We associate a magnetic field vector,  $\boldsymbol{M}_i= (z_i, 0, \ldots , 0)$, with each vertex  $v_i$ of $G$.  If we set 
\[\hat{h}(\sigma):=  -  \sum_{ \{ i,j \} \in E(G) } J_{i,j}   \delta(\sigma_i, \sigma_j)  - \sum_{v_i\in V(G)}    \sum_{\alpha =1}^q M_{i,\alpha}  \delta(\alpha,  \sigma_i),\]
then 
\[  \quad Z(G) = \sum_{\sigma \in \s (G)}   \e {-\beta h(\sigma)} =   \sum_{\sigma \in \s (G)}   \e {-\beta \hat{h}(\sigma)}. \]
Applying Theorem~\ref{ZV2} with the Hamiltonian $\hat{h}$ yields that

\[  Z(G)=  \V\left( G,\omega ; \;\{X_{\boldsymbol{M}}\}_{\boldsymbol{M}\in \mathbb{C}^q}    ,\;    \{\e{\beta J_e}-1\}_{e\in E(G)}       \right),\]
where the vertex weights are given by $\omega (v_i) =\boldsymbol{M}_i$ and, for any $\boldsymbol{M}\in \mathbb{C}^q$, 
\[  X_{\boldsymbol{M}}
= \sum_{\alpha=1}^q \e{\beta M_{ \alpha}},\]
where $M_{\alpha}$ is the $\alpha$-entry of $\boldsymbol{M}$.
However, the only indices $\boldsymbol{M}\in \mathbb{C}^q$ that actually appear are of the form $(z,0,\ldots,0)$ for some $z \in \mathbb{C}$, and if $\boldsymbol{M}$ has this form, then  $X_{\boldsymbol{M}}=\e{\beta z}+q-1$.  
Thus, we can choose our indexing set to be $\mathbb{C}$ instead of $\mathbb{C}^q$, so that 
\[  Z(G)=\V\left( G,\omega ; \;\{X_z\}_{z\in \mathbb{C}}    ,\;   \{  \e{\beta J_e}-1   \}_{e \in E(G)}      \right),\]
where $X_z=   \e{\beta z}+q-1$, as claimed.
\end{proof}

An analog of Theorem \ref{h3} and associated hierarchy may easily be derived: if all of the $z_i$'s are equal and $J_{i,j}=J$, then $Z(G)$ can be recovered from the $U$-polynomial; and if all of the $z_i$'s are of the form $k_iz$, for some fixed $z\in \mathbb{C}$ with the  $k_i$'s in $\mathbb{Z}^+$, and $J_{i,j}=J$,  then $Z(G)$ can be recovered from the $W$-polynomial.

When the Hamiltonian is that of Theorem \ref{ZV4}, a Fortuin-Kasteleyn-type representation for the Potts model  is well known. 
This Fortuin-Kasteleyn-type representation, given in Corollary~\ref{FK},  can be immediately recovered from Corollary~\ref{c.ZV1}.

\begin{corollary}\label{FK}

Suppose a complex value $z_i$ is associated to each vertex $v_i$ of a graph $G$, and the Hamiltonian is given by
 \[ h(\sigma) = - \sum_{ \{ i,j \} \in E }  J_{i,j}  \delta(\sigma_i, \sigma_j)  - \sum_{v_i\in V(G)} z_i  \delta(1,  \sigma_i).
 \]

Then 
\[   Z(G)= \sum_{A\subseteq E(G)}  X_{z_{C_1}} \cdots X_{z_{C_{k(A)}}} \left( \prod_{e\in A}  (\e{\beta J_e}-1) \right),\]
where $z_{C_l}$ is the sum of the weights, $z_i$, of all of the vertices $v_i$ in the $l$-th connected component of the spanning subgraph $(V(G), A)$, and  $X_z=     q-1 +  \e{\beta z}$. 
\end{corollary}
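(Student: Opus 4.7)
The plan is to obtain Corollary~\ref{FK} as a direct specialization of the general Fortuin-Kasteleyn-type representation given in Corollary~\ref{c.ZV1}, reusing the vector encoding from the proof of Theorem~\ref{ZV4}. Equivalently, one could invoke Theorem~\ref{ZV4} and then apply the state sum formula Theorem~\ref{t.vstatesum} to the resulting $\V$-polynomial; both routes amount to the same bookkeeping.

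First, to each vertex $v_i$ I associate the magnetic field vector $\boldsymbol{M}_i := (z_i, 0, \ldots, 0) \in \mathbb{C}^q$. With this identification, the Hamiltonian in the statement of Corollary~\ref{FK} coincides with the general Hamiltonian of Definition~\ref{general Hamiltonian}, since $\sum_{\alpha=1}^{q} M_{i,\alpha}\,\delta(\alpha,\sigma_i) = z_i\,\delta(1,\sigma_i)$. Hence Corollary~\ref{c.ZV1} applies and yields
\[ Z(G) = \sum_{A \subseteq E(G)} X_{\boldsymbol{M}_{C_1}} \cdots X_{\boldsymbol{M}_{C_{k(A)}}} \prod_{e \in A} (\e{\beta J_e} - 1), \]
where $\boldsymbol{M}_{C_l}$ is the sum of the $\boldsymbol{M}_i$ over vertices $v_i$ in the $l$-th connected component of the spanning subgraph $(V(G), A)$.

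Next, I would verify that the ``only first coordinate is nonzero'' structure is preserved by the component sums. Since each $\boldsymbol{M}_i$ has the form $(z_i, 0, \ldots, 0)$, so does every partial sum; explicitly, $\boldsymbol{M}_{C_l} = (z_{C_l}, 0, \ldots, 0)$, where $z_{C_l} := \sum_{v_i \in C_l} z_i$. Substituting into the definition $X_{\boldsymbol{M}} = \sum_{\alpha=1}^{q} \e{\beta M_\alpha}$ from Theorem~\ref{ZV2} collapses the sum to one nontrivial term plus $q-1$ copies of $\e{0}=1$, giving
\[ X_{\boldsymbol{M}_{C_l}} = \e{\beta z_{C_l}} + (q-1) = X_{z_{C_l}}. \]
Inserting this back into the expression for $Z(G)$ produces exactly the claimed identity.

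No real obstacle is anticipated: all the analytic work has already been done in establishing Theorem~\ref{ZV2}, Theorem~\ref{ZV4}, and Corollary~\ref{c.ZV1}, and the present corollary is obtained by matching variables. The only point requiring a moment of care is the reindexing of $X_{\boldsymbol{M}}$ from vectors $\boldsymbol{M}\in\mathbb{C}^q$ to scalars $z\in\mathbb{C}$, which is justified precisely because the weights accumulated on each component during the deletion-contraction reduction stay inside the one-dimensional subspace spanned by the first standard basis vector.
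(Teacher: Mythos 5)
Your proposal is correct and follows exactly the route the paper intends: the paper states that Corollary~\ref{FK} ``can be immediately recovered from Corollary~\ref{c.ZV1},'' and your argument simply fills in the (routine) details of encoding $\boldsymbol{M}_i=(z_i,0,\ldots,0)$, observing that component sums stay in that form, and collapsing $X_{\boldsymbol{M}}$ to $\e{\beta z}+q-1$.
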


In \cite{Sok99}, Sokal studied the Potts model partition function in the case where the magnetic field vectors are of the form $\boldsymbol{M}_i=(M_{i,1}, M_{i,2}, \ldots, M_{i,r},  0, \ldots , 0)  $, where $0\leq r \leq q$  is fixed. With these magnetic field vectors, Theorem~\ref{ZV2} immediately gives the following result.

\begin{theorem}\label{ZV5}
Let $G$ be a graph where each vertex $v_i$ is equipped with a magnetic field vector  $\boldsymbol{M}_i=(M_{i,1}, M_{i,2}, \ldots, M_{i,r},  0, \ldots , 0)$, and suppose
 \[ h(\sigma) = - \sum_{ \{ i,j \} \in E (G)}  J_{i,j}  \delta(\sigma_i, \sigma_j)  - \sum_{v_i\in V(G)}    \sum_{\alpha =1}^q M_{i,\alpha}  \delta(\alpha,  \sigma_i).
 \]
Then 
\[  Z(G)=\V\left( G,\omega ; \;\{X_{\boldsymbol{M}}\}_{\boldsymbol{M}\in \mathbb{C}^q}    ,\;   \{  \e{\beta J_{a,b}}-1   \}_{\{a,b\}\in E(G)}      \right),\]
where the vertex weights  are given by $\omega (v_i) =\boldsymbol{M}_i$ 
 and, for any $\boldsymbol{M}\in \mathbb{C}^q$, 
\[  X_{\boldsymbol{M}}=   q-r + \sum_{\alpha=1}^r \e{\beta M_{\alpha}},\]
where $M_{\alpha}$ is the $\alpha$-entry of $\boldsymbol{M}$.
\end{theorem}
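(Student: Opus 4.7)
The plan is to derive Theorem~\ref{ZV5} as an immediate specialization of Theorem~\ref{ZV2}. The Hamiltonian displayed in Theorem~\ref{ZV5} is precisely the general Hamiltonian of Definition~\ref{general Hamiltonian}: restricting each magnetic field vector $\boldsymbol{M}_i$ to the shape $(M_{i,1},\ldots,M_{i,r},0,\ldots,0)$ imposes no additional structural constraint on $h(\sigma)$. Therefore Theorem~\ref{ZV2} applies verbatim, yielding the $\V$-polynomial evaluation
\[
Z(G)=\V\left(G,\omega;\{X_{\boldsymbol{M}}\}_{\boldsymbol{M}\in\mathbb{C}^q},\{e^{\beta J_{a,b}}-1\}_{\{a,b\}\in E(G)}\right)
\]
with $\omega(v_i)=\boldsymbol{M}_i$ and, by the statement of Theorem~\ref{ZV2}, $X_{\boldsymbol{M}}=\sum_{\alpha=1}^q e^{\beta M_\alpha}$.

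The only remaining task is to rewrite $X_{\boldsymbol{M}}$ in the claimed closed form. Splitting the sum at index $r$ and using $e^{\beta\cdot 0}=1$ for the $q-r$ trailing zero entries of $\boldsymbol{M}$ gives
\[
X_{\boldsymbol{M}}=\sum_{\alpha=1}^r e^{\beta M_\alpha}+\sum_{\alpha=r+1}^q 1 \;=\; (q-r)+\sum_{\alpha=1}^r e^{\beta M_\alpha},
\]
which is exactly the expression stated in the theorem.

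No genuine obstacle arises; this result is a direct specialization whose purpose is to match the setting considered by Sokal in \cite{Sok99}. One minor point worth noting, though not an obstacle, is that the family of magnetic field vectors whose last $q-r$ coordinates vanish is closed under the vector addition used in the contraction rule of Definition~\ref{mvw}, since $(M_{a,\alpha}+M_{b,\alpha})=0$ whenever $\alpha>r$. Consequently one could equivalently reindex the vertex weights by $\mathbb{C}^r$ rather than $\mathbb{C}^q$ without changing the polynomial; the statement retains the $\mathbb{C}^q$ indexing purely for notational consistency with Theorem~\ref{ZV2}.
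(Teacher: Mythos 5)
Your proof is correct and follows exactly the route the paper takes: the authors simply observe that Theorem~\ref{ZV5} is an immediate specialization of Theorem~\ref{ZV2}, with $X_{\boldsymbol{M}}=\sum_{\alpha=1}^q \e{\beta M_\alpha}$ collapsing to $(q-r)+\sum_{\alpha=1}^r \e{\beta M_\alpha}$ because the trailing $q-r$ entries are zero. Your added remark that the family of such vectors is closed under the addition used in contraction is a worthwhile sanity check that the paper leaves implicit.
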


Again, an analog of Theorem \ref{h3} and associated hierarchy for this external field may easily be derived: if   each   $\boldsymbol{M}_i =k_i \boldsymbol{B}$, for $k_i \in \mathbb{Z}^+$, then $Z(G)$ can be recovered from the $W$-polynomial; and if all of the $\boldsymbol{M}_i $ are equal then $Z(G)$ can be recovered from the $U$-polynomial.

Also in \cite{Sok99}, Sokal found a Fortuin-Kasteleyn-type representation for the partition function used in the above theorem.
This Fortuin-Kasteleyn-type representation can be immediately recovered from Corollary~\ref{c.ZV1}: 
\begin{corollary}
Let $G$ be a graph where each vertex $v_i$ is equipped with a magnetic field vector  $\boldsymbol{M}_i=(M_{i,1}, M_{i,2}, \ldots, M_{i,r},  0, \ldots , 0)$. In addition let 
\[ h(\sigma) = - \sum_{ \{ i,j \} \in E(G) }  J_{i,j}  \delta(\sigma_i, \sigma_j)  - \sum_{v_i\in V(G)}    \sum_{\alpha =1}^q M_{i,\alpha}  \delta(\alpha,  \sigma_i) .
 \]
Then 
\[   Z(G)= \sum_{A\subseteq E(G)}  X_{\boldsymbol{M}_{C_1}} \cdots X_{\boldsymbol{M}_{C_{k(A)}}} \left( \prod_{e\in A}  (\e{\beta J_e}-1) \right),\]
where $\boldsymbol{M}_{C_l}$ is the sum of the weights, $\boldsymbol{M}_i$, of all of the vertices $v_i$ in the $l$-th connected component of the spanning subgraph $(V(G), A)$, and  $X_{\boldsymbol{M}}=     q-r + \sum_{\alpha=1}^r \e{\beta M_{\alpha}}$. 
\end{corollary}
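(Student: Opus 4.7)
The plan is to deduce this corollary directly from Corollary~\ref{c.ZV1}, which already provides a Fortuin-Kasteleyn-type expansion for the Potts model partition function under the most general magnetic field and edge interaction Hamiltonian. Since the Hamiltonian in the statement is a special case of the one in Corollary~\ref{c.ZV1} (it simply restricts each magnetic field vector $\boldsymbol{M}_i$ to have zeros in coordinates $r+1, \ldots, q$), the overall structure of the sum over spanning subgraphs, together with the edge factors $\prod_{e\in A}(e^{\beta J_e}-1)$, is inherited immediately. The only work is to compute $X_{\boldsymbol{M}_{C_l}}$ in this restricted setting.

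First I would invoke Corollary~\ref{c.ZV1} to write
\[
Z(G) = \sum_{A\subseteq E(G)} X_{\boldsymbol{M}_{C_1}}\cdots X_{\boldsymbol{M}_{C_{k(A)}}} \prod_{e\in A}(e^{\beta J_e}-1),
\]
where $\boldsymbol{M}_{C_l}$ is the componentwise sum of the vertex weight vectors in the $l$-th component of $(V(G),A)$, and $X_{\boldsymbol{M}} = \sum_{\alpha=1}^q e^{\beta M_\alpha}$. Next I would make the key structural observation: the set of vectors of the form $(\ast,\ldots,\ast,0,\ldots,0)$ (with zeros in positions $r+1,\ldots,q$) is closed under addition, so every $\boldsymbol{M}_{C_l}$ is again of this shape.

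Writing $\boldsymbol{M}_{C_l} = (M_{C_l,1},\ldots,M_{C_l,r},0,\ldots,0)$, the sum defining $X_{\boldsymbol{M}_{C_l}}$ splits as
\[
X_{\boldsymbol{M}_{C_l}} = \sum_{\alpha=1}^{r} e^{\beta M_{C_l,\alpha}} + \sum_{\alpha=r+1}^{q} e^{0} = (q-r) + \sum_{\alpha=1}^{r} e^{\beta M_{C_l,\alpha}},
\]
which matches exactly the expression for $X_{\boldsymbol{M}}$ asserted in Theorem~\ref{ZV5} and the corollary. Substituting this evaluation back into the expansion yields the claimed formula. There is essentially no obstacle here; the only point requiring verification is the stability of the restricted vector form under componentwise addition, which is trivial, and the rest is a direct substitution into Corollary~\ref{c.ZV1}.
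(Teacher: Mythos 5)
Your proof is correct and follows exactly the route the paper intends: the paper states that this corollary "can be immediately recovered from Corollary~\ref{c.ZV1}," and your argument supplies precisely the two small details needed, namely that vectors vanishing in coordinates $r+1,\ldots,q$ are closed under addition and that $X_{\boldsymbol{M}}$ then evaluates to $q-r+\sum_{\alpha=1}^{r}\e{\beta M_\alpha}$.
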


Our final examples are variations of the Ising model (essentially the $q=2$ Potts model) used to study glassy behaviours (see \cite{HW05, NB99}).   The first is the Ising spin glass model, which has edge dependent random bond strengths $J_e$, but no external field. The second is the Random Field Ising Model (RFIM), used to study disordered states.  It has a random magnetic field in that the $z_i$'s are randomly chosen local magnetic fields that each affect only a single site.  To avoid redundancy of proof, we merge the two models in the following theorem (this generalization is also sometimes called the RFIM), and then recover each of them separately as corollaries.  We choose these examples in part because complexity results for Ising models (or $q = 2$ Potts models) differ significantly from those for $q > 2$ Potts models, both in the classical (no external field) case and the examples here. Furthermore, there have been two very recent studies of the Ising model from a graph theoretical perspective.  \cite{AM09} treats the Ising model with constant interaction energies and constant (but non-zero) magnetic field as a graph invariant and explores graph theoretical properties encoded by it.  \cite{WF} on the other hand, creates a new graph polynomial, which has a deletion-contraction reduction for non-loop edges, and which is, up to a prefactor and change of variables, equivalent to the RFIM.  We discuss this polynomial further in the conclusion. 

The Ising model takes spin values in $\{-1,1\}$. We will let  $\tau$  denote a state for the Ising model, which is a map $\tau:V(G)\rightarrow \{-1,+1\}$. As usual we set $\tau_i:=\tau(v_i)$. Also we will let $\mathcal{T}(G)$ be the set of states for the Ising model.

\begin{theorem}\label{Ising merge} Let $G$ be a graph with a vertex weight $\omega(v_i)=z_i\in \mathbb{C}$ associated to each vertex $v_i$, and suppose the Hamiltonian and partition function are given by
 \begin{equation}\label{e.Isingham}
  h(\tau) = - \sum_{ \{ i,j \} \in E(G) }  {J_{i,j} \tau_i \tau_j} - \sum_{v_i \in V(G)}{z_i \tau_i} \quad \text{and} \quad Z(G)=\sum_{\tau\in \mathcal{T}(G)} \e{-\beta h(\tau)}.
  \end{equation}
 Then
\[  Z(G)= \e{-\beta \left(\sum_{ e \in E }  J_e +3 \sum_{i \in V(G)} z_i  \right)}  \V\left( G,\omega  ; \;\{x_{z}\}_{z\in \mathbb{C}}    ,\;   \{  \e{2\beta J_{e}}-1   \}_{e\in E(G)}      \right),\]
where for any $z\in \mathbb{C}$, 
$  x_{z}=    \e{2z} + \e{4z} $.

Furthermore, $Z(G)$ is a polynomial in the variables $\left\{\boldsymbol{v},  x_{z} \; | \; z \in  \mathcal{M} \right\},$
where $\boldsymbol{v} = \{ \e{\beta J_{e}}-1 \}_{e\in E(G)}$, and $\mathcal{M}=\left.\left\{   \sum_{i=1}^{|V(G)|}  \varepsilon_i z_i  \right|  \varepsilon_i = 0 \text{ or } 1  \right\} $.

$Z(G)$ also has Fortuin-Kasteleyn-type representation
\[\sum_{A\subseteq E(G)}  x_{z_{C_1}} \cdots x_{z_{C_{k(A)}}} \left( \prod_{e\in A}  (\e{\beta J_e}-1) \right),\]
where $z_{C_l}$ is the sum of the weights, $z_i$, of all of the vertices $v_i$ in the $l$-th connected component of the spanning subgraph $(V(G), A)$. 
\end{theorem}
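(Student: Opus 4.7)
The natural plan is to recast the Ising Hamiltonian as a Potts-style Hamiltonian in the form of Definition~\ref{general Hamiltonian}, factor an overall multiplicative constant out of $Z(G)$, and then invoke Theorem~\ref{ZV2}. Two linear identities will effect the conversion: $\tau_i\tau_j = 2\delta(\tau_i,\tau_j)-1$ rewrites the edge interaction in terms of the Kronecker delta, while a suitable choice of vector $\boldsymbol{M}_i\in\mathbb{C}^2$ will express $z_i\tau_i$ as $\sum_{\alpha}M_{i,\alpha}\delta(\alpha,\sigma_i)$ up to an additive constant (after identifying Ising spins $\tau_i\in\{-1,+1\}$ with Potts spins $\sigma_i\in\{1,2\}$).

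To obtain the specific prefactor $\e{-\beta(\sum_e J_e+3\sum_i z_i)}$ that appears in the statement, I will take $\boldsymbol{M}_i = (4z_i,2z_i)$, for which $\sum_{\alpha}M_{i,\alpha}\delta(\alpha,\sigma_i) = z_i\tau_i + 3z_i$. Substituting both identities into (\ref{e.Isingham}) yields
\begin{equation*}
h(\tau) = -\sum_{\{i,j\}\in E}2J_{i,j}\,\delta(\sigma_i,\sigma_j) - \sum_i\sum_{\alpha}M_{i,\alpha}\,\delta(\alpha,\sigma_i) + \sum_e J_e + 3\sum_i z_i,
\end{equation*}
and factoring the additive constants out of $\e{-\beta h(\tau)}$ leaves exactly the partition function of a $q=2$ Potts model with interaction energies $2J_{i,j}$ and vector weights $\boldsymbol{M}_i$.

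Applying Theorem~\ref{ZV2} to this auxiliary Potts model produces a $\V$-polynomial evaluation with edge variables $\gamma_e = \e{2\beta J_e}-1$ and vector-indexed variables $X_{\boldsymbol{M}} = \e{\beta M_1}+\e{\beta M_2}$. Every component weight sum that can arise under the $\V$-contraction rule has the form $(4z,2z)$ for some scalar $z$, so I can re-index by the scalar weight to recover $x_z = \e{4z}+\e{2z}$ as stated, with vertex weights $\omega(v_i)=z_i$ summing along connected components as prescribed by $\V$.

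The polynomial statement then follows immediately: $\V$ is by construction polynomial in its $x$- and $\gamma$-variables, and the weight sums $z_{C_l}$ over components of spanning subgraphs range exactly over $\mathcal{M}=\{\sum_i\varepsilon_i z_i:\varepsilon_i\in\{0,1\}\}$, so $Z(G)$ is a polynomial in $\boldsymbol{v}$ and $\{x_z\}_{z\in\mathcal{M}}$. The Fortuin-Kasteleyn-type representation drops out by substituting the state-sum expansion of Theorem~\ref{t.vstatesum} into the $\V$-evaluation above (equivalently, by applying Corollary~\ref{c.ZV1} to the auxiliary Potts model and re-indexing). The main subtlety is the nonobvious choice of $\boldsymbol{M}_i=(4z_i,2z_i)$: any shifted variant $(z_i+c,-z_i+c)$ also encodes $z_i\tau_i$ correctly up to a constant, but only this particular shift pins down the specific prefactor $3\sum_i z_i$ and the formula $x_z=\e{2z}+\e{4z}$ appearing in the statement.
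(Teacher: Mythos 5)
Your proposal is correct and follows essentially the same route as the paper: rewrite the Ising Hamiltonian as a $q=2$ Potts Hamiltonian via $\tau_i\tau_j=2\delta(\sigma_i,\sigma_j)-1$ and a linear encoding of $z_i\tau_i$ by a vector weight, factor out the constant $\e{-\beta(\sum_e J_e+3\sum_i z_i)}$, apply Theorem~\ref{ZV2} with interaction energies $2J_{i,j}$, and re-index the vector weights by a scalar. The only difference is your opposite spin-labelling convention, which yields $\boldsymbol{M}_i=(4z_i,2z_i)$ in place of the paper's $\boldsymbol{M}_i=2z_i(1,2)=(2z_i,4z_i)$; this is immaterial and produces the same $x_z$.
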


\begin{proof}
There is a natural bijection between $\mathcal{T}(G)$ and $\mathcal{S}(G) =\left\{\sigma: V(G)\rightarrow \{1,2\}  \right\}$, which, given $\tau\in \mathcal{T}(G)$,    is determined by setting $\sigma_i=1$ if $\tau_i=-1$, and $\sigma_i=2$ if $\tau_i=+1$. This determines a state $\sigma\in \mathcal{S}(G)$. 

Under this bijection  between $\mathcal{T}(G)$ and $\mathcal{S}(G)$, observe that   $\tau_i=2\sigma_i -3$ and $\tau_i\tau_j =2\delta (\sigma_i, \sigma_j) -1$, and so we may write the Hamiltonian in Equation~(\ref{e.Isingham}) as
\[
h(\sigma)= - 2\sum_{ \{ i,j \} \in E(G) } { J_{i,j} \delta (\sigma_i, \sigma_j)}- 2\sum_{V_i \in V(G)}{z_i\sigma_i} + \sum_{ \{ i,j \} \in E } { J_{i,j}} +3 \sum_{i \in V(G)}{z_i}  \]
With this Hamiltonian,  $Z(G)= \sum_{\tau \in \mathcal{T}(G)} {\e{-\beta h(\tau)}} =  \sum_{\sigma \in \mathcal{S}(G)} {\e{-\beta h(\sigma)}}$.

We now let 
\[
\tilde{h}(\sigma)= - \sum_{ \{ i,j \} \in E } { 2J_{i,j} \delta (\sigma_i, \sigma_j)}- \sum_{i \in V(G)}{2 z_i\sigma_i}.
\]
Then
\[  Z(G) = \e{-\beta\left(\sum_{ e \in E }  J_e +3 \sum_{i \in V(G)} z_i  \right)} \sum_{\sigma \in \mathcal{S}(G)} {\e{-\beta \tilde{h}(\sigma)}}.   \]
By applying Theorem~\ref{ZV2} we then have 
\[  Z(G)= \e{-\beta\left(\sum_{ e \in E }  J_e +3 \sum_{i \in V(G)} z_i  \right)}   \V\left( G,\omega ; \{X_{\boldsymbol{M}}\}_{\boldsymbol{M}\in \mathbb{C}^2} ,  \{ \e{2\beta J_e} -1   \}_{e\in E(G)}   \right),   \]
where the weight function $\omega$ is now $\omega_i= \boldsymbol{M}_i:=2z_i (1,2)\in \mathbb{C}^2$, and, if  $\boldsymbol{M}=z(1,2)$ then
$ X_{\boldsymbol{M}}  =   \sum_{\alpha=1}^2 \e{\beta M_{\alpha}} = \e{2z} + \e{4z} $.

Finally, observing that since each $\boldsymbol{M} = z(1,2)$, for some $z\in \mathbb{C}$, we can take $\mathbb{C}$ as the indexing set instead of $\mathbb{C}^2$. This gives, 
  \[  Z(G)= \e{-\beta\left(\sum_{ e \in E }  J_e +3 \sum_{i \in V(G)} z_i  \right)}   \V\left( G,\omega ; \{x_{z}\}_{z\in \mathbb{C}} ,  \{ \e{2\beta J_e} -1   \}_{e\in E(G)}   \right),   \]
where $x_z=   \e{2z} + \e{4z}   $. This gives the relation between the $\V$-polynomial and $Z(G)$.

The proofs that $Z(G)$ is a polynomial in the given expressions and has a Fortuin-Kasteleyn-type representation are also very similar to those given previously, and are therefore omitted.

\end{proof}

We first recover the well known result for the spin glass model.

\begin{corollary}
The Ising spin glass model given by
\[ h(\sigma) = - \sum_{ \{ i,j \} \in E(G) } {J_{i,j} \tau_i \tau_j}, \quad \text{and} \quad Z(G)=\sum_{\tau\in \mathcal{T}(G)} \e{-\beta h(\tau)} , 
 \]
has partition function 
\[ Z(G)=
\e{-\beta\sum_{ e \in E } J_{e}} Z_T (G; q=2, \{ \e{2\beta J_e}-1\}_{e\in E(G)}).
\]
\end{corollary}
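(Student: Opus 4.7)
The plan is to derive this as a direct specialization of Theorem~\ref{Ising merge} by taking all vertex weights $z_i=0$, which is precisely the case of no external field. With this choice, the Hamiltonian in Equation~(\ref{e.Isingham}) collapses to $h(\tau)=-\sum_{\{i,j\}\in E(G)} J_{i,j}\tau_i\tau_j$, so Theorem~\ref{Ising merge} applies verbatim to the spin glass partition function.

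First I would examine the exponential prefactor $\e{-\beta(\sum_{e\in E}J_e + 3\sum_{i\in V(G)} z_i)}$ in Theorem~\ref{Ising merge}. Setting $z_i=0$ for every vertex kills the second inner sum and leaves the prefactor $\e{-\beta \sum_{e\in E} J_e}$, which matches the claimed prefactor in the corollary. Next I would inspect the vertex-weight variables appearing in $\V$: since every $z_i$ is now $0$, the only index that actually occurs among the $\{x_z\}_{z\in \mathbb{C}}$ is $z=0$, so every relevant variable equals $x_0 = \e{2\cdot 0}+\e{4\cdot 0}=2$.

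With all vertex-weight variables in $\V$ set to the common value $2$, I would invoke Theorem~\ref{vprops}(2), which says that $\V(G,\omega;x_i=\theta,\boldsymbol{\gamma})=Z_T(G;\theta,\boldsymbol{\gamma})$, independent of the weights $\omega$. Applying this with $\theta=2$ and $\boldsymbol{\gamma}=\{\e{2\beta J_e}-1\}_{e\in E(G)}$ converts the $\V$ evaluation into $Z_T(G;2,\{\e{2\beta J_e}-1\}_{e\in E(G)})$. Since the $q=2$ Ising model uses two spins, the $2$ in the first slot is precisely $q$, matching the corollary's right-hand side. Combining the prefactor computation with the $\V\to Z_T$ reduction yields the stated identity.

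There is essentially no real obstacle here; the only thing to be slightly careful about is the bookkeeping in restricting the indexing set from $\mathbb{C}$ (or $\mathbb{C}^2$ upstream) to the singleton $\{0\}$, and confirming that Theorem~\ref{vprops}(2) indeed applies regardless of the torsion-free commutative semigroup in which the weights live. Both are immediate once one notes that only one weight value ever appears and that Theorem~\ref{vprops}(2) is stated independently of $\omega$.
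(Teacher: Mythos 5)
Your proposal is correct and follows exactly the route the paper intends: specialize Theorem~\ref{Ising merge} at $z_i=0$ so the prefactor reduces to $\e{-\beta\sum_{e}J_e}$ and the only vertex-weight variable occurring is $x_0=2$, then collapse $\V$ to $Z_T(G;2,\{\e{2\beta J_e}-1\})$ via Theorem~\ref{vprops}(2). The bookkeeping points you flag (singleton indexing set, weight-independence of Theorem~\ref{vprops}(2)) are handled correctly.
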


More importantly, we have the following results for the RFIM.

\begin{corollary}
The RFIM  given by
\[ 
h(\tau) = - J\sum_{ \{ i,j \} \in E }  {\tau_i \tau_j} - \sum_{i \in V(G)}{z_i \tau_i},\quad \text{and} \quad Z(G)=\sum_{\tau\in \mathcal{T}(G)} \e{-\beta h(\tau)},
\]
has partition function 
\[  Z(G)= \e{-\beta(J|V(G)| +3 \sum_{i \in V(G)} z_i  )}  \V\left( G,\omega  ; \;\{x_{z}\}_{z\in \mathbb{C}}    ,\;    \e{2\beta J}-1        \right),\]
where the vertex weights $\omega$ are given by $\omega (v_i) =z_i$, 
 and, for any $z\in \mathbb{C}$, 
$  x_{z}=    \e{2z} + \e{4z} $.

Furthermore, $Z(G)$ is a polynomial in the variables $\left\{ \e{\beta J}-1,  x_{z} \; | \; z \in  \mathcal{M} \right\},$
where $\mathcal{M}=\left.\left\{   \sum_{i=1}^{|V(G)|}  \varepsilon_i z_i  \right|  \varepsilon_i = 0 \text{ or } 1  \right\} $.
 
$Z(G)$ also has Fortuin-Kasteleyn-type representation
\[Z(G)=\sum_{A\subseteq E(G)}  x_{z_{C_1}} \cdots x_{z_{C_{k(A)}}}  \left(\e{\beta J}-1 \right)^{|A|},\]
where $z_{C_l}$ is the sum of the weights, $z_i$, of all of the vertices $v_i$ in the $l$-th connected component of the spanning subgraph $(V(G), A)$. 
\end{corollary}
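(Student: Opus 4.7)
The plan is to prove the corollary as a direct specialization of Theorem~\ref{Ising merge} to the case of constant edge interaction energies $J_{i,j}=J$ for all $\{i,j\}\in E(G)$. No new ideas beyond that theorem are needed; the work is entirely substitution and bookkeeping.

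First, I would verify that the hypotheses of Theorem~\ref{Ising merge} are satisfied: the RFIM Hamiltonian is precisely the one in~(\ref{e.Isingham}) with every $J_{i,j}$ replaced by a common constant $J$, and the vertex weights $z_i\in\mathbb{C}$ are unchanged. The map between Ising states $\tau$ and Potts states $\sigma$ used in the proof of Theorem~\ref{Ising merge} works verbatim here, so $Z(G)=\sum_{\tau\in\mathcal{T}(G)}\e{-\beta h(\tau)}$ can be rewritten in the Potts formulation used throughout Section~\ref{s.special}.

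Next, I would substitute $J_e=J$ directly into the three conclusions of Theorem~\ref{Ising merge}. In the main identity, the prefactor $\e{-\beta(\sum_{e\in E}J_e+3\sum_i z_i)}$ collapses to the stated form (with the sum over edges becoming $J$ times the edge count), and the indexed family $\{\e{2\beta J_e}-1\}_{e\in E(G)}$ reduces to a single scalar $\e{2\beta J}-1$ which serves as the common edge weight supplied to $\V$. The polynomiality statement follows immediately: the set $\boldsymbol{v}=\{\e{\beta J_e}-1\}_{e\in E(G)}$ collapses to the single variable $\e{\beta J}-1$, while the index set $\mathcal{M}$ of possible component-sum weights is unchanged.

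For the Fortuin--Kasteleyn-type representation, I would apply the state-sum from Theorem~\ref{t.vstatesum} to the specialized $\V$-polynomial (equivalently, specialize Corollary~\ref{c.ZV1}): since every edge now carries the same weight $\e{\beta J}-1$, the product $\prod_{e\in A}(\e{\beta J_e}-1)$ in the general FK formula becomes $(\e{\beta J}-1)^{|A|}$, giving exactly the displayed expansion.

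There is essentially no obstacle. The only points requiring care are (i) tracking the constant prefactor correctly when the sum $\sum_e J_e$ is replaced by its constant-$J$ value, and (ii) noting that the indexing set for the $x_z$ variables does not shrink when $J$ becomes constant, since $\mathcal{M}$ depends on the vertex weights $z_i$, which are still allowed to vary freely over $\mathbb{C}$.
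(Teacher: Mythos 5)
Your proposal is correct and is precisely the route the paper takes: the corollary is the immediate specialization of Theorem~\ref{Ising merge} to constant interaction energies $J_{i,j}=J$, and the paper supplies no further argument beyond that substitution. One remark: your substitution correctly produces the prefactor exponent $J|E(G)|$ (``$J$ times the edge count''), whereas the corollary as printed reads $J|V(G)|$; this, like the dropped factors of $2$ in the edge variables $\e{\beta J}-1$ appearing in the polynomiality and Fortuin--Kasteleyn claims, is a typographical slip inherited from the statement of Theorem~\ref{Ising merge} rather than a gap in your reasoning, so you should flag the discrepancy rather than assert that your (correct) expression ``collapses to the stated form.''
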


\section{Leveraging Prior Results}

\subsection{Computational complexity}

Realizing the Potts model partition function as an evaluation of the $\V$-polynomial now means that the computation complexity results for the $W$-polynomial apply directly to partition functions with an external field.  We collect some of these here, drawing on the work of Noble and Welsh \cite{NW99}, and Noble \cite{nob09}.

The computation complexity of the Tutte polynomial, and hence zero field Potts model partition function, has been extensively studied (see, in particular, Jerrum \cite{Jer87}, Jaeger, Vertigan, and Welsh \cite{JVW90} and also Vertigan \cite{Ver05}).  The conclusion of these investigations  is that computing the Tutte
polynomial is $\#P$-Complete for general graphs, except when $q=1$, or for a small set of special points, or for planar graphs when $q=2$. Furthermore, approximation is provably difficult as well: see \cite{ AFW94, AFW95, WM00} for overviews and \cite{GJ07, GJ07b, Jer07} for recent results in this area.

Thus, it is not surprising that the computational complexity consequences of Theorem~\ref{ZV} and Section~\ref{s.special} for the variable field Potts model are somewhat bleak.  Noble and Welsh \cite{NW99} have shown that computing any coefficient of the $W$-polynomial is 
$\sharp$P-hard even for trees, and  specific coefficients are $\sharp$P-hard for complete graphs.  Thus the complexity of the variable field Potts model is at least as problematic (presumably more so if variable interaction energies are also used).  Additionally, Noble and Welsh \cite{NW99} have shown that computing evaluations of the $W$-polynomial, and hence the Potts model partition function with external field, are $\sharp$P-hard not only for trees, but even just for stars. 

The prognosis in the case of a constant, but non-zero, magnetic field, is somewhat better.  As noted above, the $U$-polynomial corresponds to a constant magnetic field vector. Noble \cite{nob09} has shown that if $G$ is a graph with tree-width at most $K$, then  the $U$-polynomial, and hence the partition function  of $G$, may be evaluated at any point in roughly $O(a_Kn^{2K+3})$ arithmetic operations. 

Complexity results for the Ising model, essentially the $q=2$ Potts model, differ significantly from the general Potts model.  In particular, the partition function for the Ising model with zero field and constant interaction energies can be reformulated as a tractable problem for planar graphs (see \cite{Fis66, Kas67, JVW90}).  Considerable work has been done investigating the computational complexity of the variable field Ising model under under several conditions, notably by Goldberg and Jerrum \cite{GJ07b}, and Jerrum and Sinclair \cite{JS93}. These include complexity classifications and, where possible, approximation algorithms for the Ising model with different restrictions on the interaction energies and magnetic fields.  In particular there is no fully polynomial randomized approximation scheme (FPRAS) in the antiferromagetic case, but there is a FPRAS in the ferromagnetic case provided that all the local magnetic field values have the same sign. Without this restriction on the magnetic field values, the problem again becomes intractable.

However, Theorems \ref{h1} and \ref{Ising merge} imply that the Ising model with constant interaction energies and a constant magnetic field vector (without all entries necessarily having the same sign), is an evaluation of the $U$-polynomial.  Thus, by the results of Noble \cite{nob09}, it may be computed in polynomial time for graphs with bounded tree width.  It is very likely that this result may be improved by restricting to the $q=2$ case of the Ising model, and also likely that these results might be extended to variable interaction energies.   Applying the theory in the other direction, the complexity results for approximating the Ising model with external field from \emph{e.g.} \cite{GJ07b, JS93} now immediately apply to give computational complexity information for the $V$-polynomial when $q=2$.

\subsection{Zeros and phase transitions}

In addition to the computational complexity results discussed above, there has been considerable research into the zeros of the chromatic and multivariate Tutte polynomial.  From a graph theory perspective, this was traditionally motivated by graph colouring questions, since the chromatic polynomial of a graph $G$, when evaluated at a non-negative integer $q$, gives the number of ways to properly colour $G$ using $q$ colours.  From a statistical mechanics perspective, the interest stems from phase transitions which may be identified by (accumulation points of) zeros in the partition function. (See, for example,  \cite{BE-MPS10, Roy09, Shr01, Sok00, Sok05} and the references therein.)  For example, considerable effort has been  devoted to clearing regions of the complex plane, particularly those containing intervals of the real axis, of roots of the chromatic polynomial, thus precluding any phase transitions.  However, these investigations, particularly when theoretical as opposed to computer simulations, are largely in the absence of an external magnetic field.  The authors are not aware of any investigations of the zeros of the $W$-polynomial. It is likely that the techniques developed for the classical and multivariable Tutte and chromatic polynomials may be adapted to the $U$-, $\V$- and $W$-polynomials and thus extended to Potts models with external fields. 

\subsection{Knot theory connections}

Connections between knot theory and Potts model were first noted over twenty years ago by Jones \cite{Jo2}, and were made concrete by Kauffman \cite{Ka,Ka2} shortly thereafter. Such relations between statistical mechanics and knot theory were soon found for other quantum knot invariants by Jones  \cite{Jo} and  Turaev \cite{Tu}, and have since been explored  by many others. We refer the reader to either  Jones' paper \cite{Jo} or Wu's survey article \cite{Wu1} for an overview  of connections between statistical mechanical models and knot invariants. 
The relations between statistical mechanical models (particularly the Potts model and ice-type models) are of fundamental   and continued importance in knot theory. For example there has been recent interest in using topological versions of the Potts model to explore knot invariants  (see \cite{Mof} and the references therein).

Each of the relations between physics and knot theory mentioned above uses a graph based on the link diagram. The $U$-polynomial (and therefore the $W$- and $\V$-polynomials) can be used to construct knot invariants in a very different way. In fact, it is this different construction, due to Chmutov, Duzhin and Lando in the sequence of papers \cite{CDL94I, CDL94II, CDL94III}, that led Noble and Welsh to define the $U$- and $W$-polynomials. (The $U$- and $W$-polynomials arose through an investigation of the combinatorics  behind Chmutov, Duzhin and Lando's work on Vassiliev invariants.) This construction of knot invariants via the $U$-polynomial uses the theory of quantum and Vassiliev knot invariants (see \cite{Oht02}, for example, for an introduction to quantum and Vassiliev knot theory). In short, by applying the $U$-polynomial to the intersection graph of a chord diagram one obtains a weight system (a map $\mathcal{A}\rightarrow \mathbb{Q}$,where $\mathcal{A}$ is the algebra of chord diagrams  from the theory of Vassiliev invariants). One obtains a knot invariant by composing this weight system with the  Kontsevich invariant. The resulting knot invariants were categorized by Lieberum in \cite{Lie00}. By the results presented in this paper, these Vassiliev invariants provide a new connection between knot invariants and the Potts model.    

It is very provocative to have two very different connections between the Potts model and quantum knot invariants.  The problem of fully understanding the connections between these two applications of statistical mechanics to knot theory is one that the authors consider to be important.

\subsection{Other Hamiltonians}

While we have given examples of several Potts models that may be unified by the $\V$-polynomial, this list is not exhaustive, being only intended to illustrate the applications and techniques.  Because of the generality of the indexing set, even Theorem \ref{ZV2} might be adapted to other applications.  A more ambitious direction would be determining if non-linear terms in the Hamiltonian, such as the squared differences that appear in some biological models (see \cite{GG92, O+03} for example), might be assimilated into this theory in some way.

\subsection{Combinatorial properties}

Although the motivation for defining the $\V$-polynomial comes from statistical mechanics, it is an interesting combinatorial object in its own right, with potential applications in knot theory as well as statistical mechanics.  Thus, further study of its combinatorial properties may prove fruitful.  The papers \cite{AM09} and \cite{WF} are particularly relevant in this regard.  \cite{AM09} discovers a number of graph theoretical properties that are encoded by the Ising model with constant interaction energies and magnetic field.  Thus, all these properties are also encoded by the $\V$-polynomial.  A natural direction of investigation is to extend such results via the $\V$-polynomial to more general situations.  The $\Theta$ polynomial of \cite{WF} is especially interesting, and since it is equivalent to the RFIM, it is also a specialization of the $\V$-polynomial by Theorem \ref{Ising merge}.  This relation provides the appropriate transform of the $\Theta$ polynomial to relate it to the Tutte polynomial, and also gives a deletion-contraction reduction  for $\Theta$ that holds for non-constant vertex weights.  Furthermore, one of the results of \cite{WF} is that the $\Theta$ polynomial gives the ratio of the RFIM to the Bethe approximation, an important estimate of the partition function.  Thus, since both $\Theta$ and the RFIM are evaluations of the $\V$-polynomial, the Bethe approximation may be expressed as a ratio of $\V$-polynomials.  The connection between the $\V$-polynomial and $\Theta$ polynomial warrants further exploration.

\section*{Acknowledgements}
We thank Alain Brizard, Leslie Ann Goldberg, Maria Kiskowski, Steve Noble, Robert Shrock, Jim Stasheff, and especially Alan Sokal, for several informative conversations.


\bibliographystyle{elsarticle-num}

\end{document}